\newcommand{\cO}{\mathcal O}
\renewcommand{\ker}{{\rm ker}\,}
\newcommand{\coker}{{\rm coker}\,}
\newcommand{\Si}{{\rm Sink}}
\DeclareMathOperator{\Cone}{Cone}
\newcommand{\coli}{\operatornamewithlimits{colim}}
\newcommand{\tor}{\mathrm{Tor}}
\newcommand{\comment}[1]{}  
\def\N{\mathbb{N}}
\def\H{\mathbb{H}}
\def\Z{\mathbb{Z}}
\newcommand\triqui{\vartriangleleft}
\newcommand{\weq}{\overset{\sim}{\to}}
\numberwithin{equation}{section} \theoremstyle{plain}
\newtheorem{thm}[equation]{Theorem}
\newtheorem{cor}[equation]{Corollary}
\newtheorem{lem}[equation]{Lemma}
\newtheorem{prop}[equation]{Proposition}
\theoremstyle{definition}
\theoremstyle{remark}
\newtheorem{rem}[equation]{Remark}
\newtheorem{exa}[equation]{Example}
\newtheorem{que}[equation]{Question}
\newtheorem*{ack}{Acknowledgement}
\begin{document}
\bibliographystyle{plain}

\author{ Pere Ara}

\author{Guillermo Corti\~nas}

\title{Tensor products of Leavitt path algebras}
\thanks{The first author was partially supported by
DGI MICIIN-FEDER MTM2008-06201-C02-01, and by the Comissionat per
Universitats i Recerca de la Generalitat de Catalunya. The second
named author was supported by CONICET and partially supported by
grants PIP 112-200801-00900, UBACyTs X051 and 20020100100386, and MTM2007-64074.}
\address{P. Ara\\
Departament de Matem\`atiques\\ Universitat Aut\`onoma de
Barcelona\\ 08193 Bellaterra (Barcelona), Spain}
\email{para@mat.uab.cat}
\address{G. Corti\~nas\\ Dep. Matem\'atica and Instituto Santal\'o\\ Ciudad Universitaria Pab 1\\
1428 Buenos Aires, Argentina}
\email{gcorti@dm.uba.ar}\urladdr{http://mate.dm.uba.ar/\~{}gcorti}

\date{\today}

\begin{abstract}
We compute the Hochschild homology of Leavitt path algebras over a
field $k$. As an application, we show that $L_2$ and $L_2\otimes
L_2$ have different Hochschild homologies, and so they are not
Morita equivalent; in particular they are not isomorphic. Similarly,
$L_\infty$ and $L_\infty\otimes L_\infty$ are distinguished by their
Hochschild homologies and so they are not Morita equivalent either.
By contrast, we show that $K$-theory cannot distinguish these
algebras; we have $K_*(L_2)=K_*(L_2\otimes L_2)=0$ and
$K_*(L_\infty)=K_*(L_\infty\otimes L_\infty)=K_*(k)$.
\end{abstract}

\maketitle

\section{Introduction}
Elliott's theorem \cite{Rord} stating that $\mathcal O _2\otimes
\mathcal O_2\cong \mathcal O_2$ plays an important role in the proof
of the celebrated classification theorem of Kirchberg algebras in
the UCT class, due to Kirchberg \cite{Kirch} and Phillips
\cite{Phill}. Recall that a Kirchberg algebra is a purely infinite,
simple, nuclear and separable C*-algebra. The Kirchberg-Phillips
theorem states that this class of simple C*-algebras is completely
classified by its topological $K$-theory. The analogous question
whether the algebras $L_2$ and $L_2\otimes L_2$ are isomorphic has
remained open for some time. Here $L_2$ is the Leavitt algebra of
type $(1,2)$ over a field $k$ (see \cite{lea}), that is,  the
$k$-algebra with generators $x_1,x_2,x_1^*,x_2^*$ and relations
given by $x_i^*x_j=\delta_{i,j}$ and $\sum _{i=1}^2 x_ix_i^*=1$.

\medskip

In this paper we obtain a negative answer to this question. Indeed,
we analyze a much larger class of algebras, namely the tensor
products of Leavitt path algebras of finite quivers, in terms of
their Hochschild homology, and we prove that, for $1\le n<m\le
\infty$, the tensor products $E=\bigotimes _{i=1}^nL(E_i)$  and
$F=\bigotimes _{j=1}^mL(F_j)$ of Leavitt path algebras of
non-acyclic finite quivers $E_i$, $F_j$, are distinguished by their
Hochschild homologies (Theorem \ref{thm:tensofin}). Because
Hochschild homology is Morita invariant, we conclude that $E$ and
$F$ are not Morita equivalent for $n<m$. Since $L_2$ is the Leavitt
path algebra of the graph with one vertex and two arrows, we obtain
that $L_2\otimes L_2$ and $L_2$ are not Morita equivalent; in
particular they are not isomorphic.

\medskip
Recall that, by a theorem of Kirchberg \cite{kp}, a simple, nuclear
and separable $C^*$-algebra $A$ is purely infinite if and only if
$A\otimes\cO_\infty\cong A$. We also show that the analogue of
Kirchberg's result is not true for Leavitt algebras. We prove in
Proposition \ref{prop:linfi} that if $E$ is a non-acyclic quiver,
then $L_\infty\otimes L(E)$ and $L(E)$ are not Morita equivalent,
and also that $L_\infty\otimes L_\infty$ and $L_\infty$ are not
Morita equivalent.

\medskip

Using the results in \cite{abc} we prove that the algebras $L_2$ and
$L_2\otimes L(F)$, for $F$ an arbitrary finite quiver, have trivial
$K$-theory: all algebraic $K$-theory groups $K_i$, $i\in \Z$, vanish
on them (this follows from Lemma \ref{reg-super} and Proposition
\ref{no-distinguish}). We also compute
$K_*(L(F))=K_*(L_\infty\otimes L(F))$ and that
$K_*(L_\infty)=K_*(L_\infty\otimes L_\infty)=K_*(k)$ is the
$K$-theory of the ground field (see Proposition \ref{LinfKabsorbs}
and Corollary \ref{cor:linfy}). This implies in particular that, in
contrast with the analytic situation, no classification result, in
terms solely of $K$-theory, can be expected for a class of central,
simple $k$-algebras, containing all purely infinite simple unital
Leavitt path algebras, and closed under tensor products. It is worth
mentioning that an important step towards a $K$-theoretic
classification of purely infinite simple Leavitt path algebras of
finite quivers has been achieved in \cite{ALPS}.

\medskip

We refer the reader to \cite{AB}, \cite{AMP} and \cite{ra} for the
basics on Leavitt algebras, Leavitt path algebras and graph
C*-algebras,  and to \cite{RordEnc} for a nice survey on the
Kirchberg-Phillips Theorem.

\medskip

\paragraph{\em Notations.} We fix a field $k$; all vector spaces,
tensor products and algebras are over $k$. If $R$ and $S$ are unital
$k$-algebras, then by an $(R,S)$-bimodule we understand a left
module over $R\otimes S^{op}$. By an $R$-bimodule we shall mean an
$(R,R)$ bimodule, that is, a left module over the enveloping algebra
$R^e=R\otimes R^{op}$. Hochschild homology
of $k$-algebras is always taken over $k$; if $M$ is an $R$-bimodule,
we write
\[
HH_n(R,M)=\tor_n^{R^e}(R,M)
\]
for the Hochschild homology of $R$ with coefficients in $M$; we
abbreviate $HH_n(R)=HH_n(R,R)$.

\section{Hochschild homology}\label{sec:huni}

Let $k$ be a field, $R$ a $k$-algebra and $M$ an $R$-bimodule. The
Hochschild homology $HH_*(R,M)$ of $R$ with coefficients in $M$ was
defined in the introduction; it is computed by the {\em Hochschild
complex} $HH(R,M)$ which is given in degree $n$ by
\[
HH(R,M)_n=M\otimes R^{\otimes n}
\]
It is equipped with the Hochschild boundary map $b$ defined by
\[
b(a_0\otimes a_1\otimes\dots\otimes
a_n)=\sum_{i=0}^{n-1}(-1)^i a_0\otimes\dots\otimes
a_ia_{i+1}\otimes\dots\otimes a_n+(-1)^na_na_0\otimes\dots\otimes
a_{n-1}
\]
If $R$ and $M$ happen to be $\Z$-graded, then $HH(R,M)$ splits into
a direct sum of subcomplexes
\[
HH(R,M)=\bigoplus_{m\in\Z}{}_mHH(R,M)
\]
The homogeneous component of degree $m$ of $HH(R,M)_n$ is the linear
subspace of $HH(R,M)_n$ generated by all elementary tensors
$a_0\otimes\dots\otimes a_n$ with $a_i$ homogeneous and
$\sum_i|a_i|=m$. One of the first basic properties of the Hochschild
complex is that it commutes with filtering colimits. Thus we have

\begin{lem}\label{lem:coli}
Let $I$ be a filtered ordered set and let $\{(R_i,M_i):i\in I\}$ be
a directed system of pairs $(R_i,M_i)$ consisting of an algebra
$R_i$ and and  an $R_i$-bimodule $M_i$, with algebra maps $R_i\to
R_j$ and $R_i$-bimodule maps $M_i\to M_j$ for each $i\le j$. Let
$(R,M)=\coli_{i}(R_i,M_i)$. Then $HH_n(R,M)=\coli_i HH_n(R_i,M_i)$
($n\ge 0$).
\end{lem}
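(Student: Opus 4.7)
The plan is to verify the claim at the level of chain complexes and then pass to homology, using that filtered colimits commute with tensor products and with homology.

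First I would observe that for each fixed $n$, the degree-$n$ piece $HH(R_i, M_i)_n = M_i \otimes R_i^{\otimes n}$ is a functor in $(R_i, M_i)$, and the Hochschild boundary $b$ is natural with respect to bimodule maps covering algebra maps. In particular, the structure maps of the directed system induce maps of Hochschild complexes $HH(R_i, M_i) \to HH(R_j, M_j)$ for $i \le j$, so $\{HH(R_i, M_i)\}_{i \in I}$ forms a directed system of complexes of $k$-vector spaces.

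Next I would show that the natural map
\[
\coli_i HH(R_i, M_i)_n \longrightarrow HH(R, M)_n
\]
is an isomorphism for every $n$. Since we are working over a field $k$, the tensor product $\otimes = \otimes_k$ is a left adjoint and therefore commutes with arbitrary colimits; iterating, so does any finite tensor power, and hence so does $M \otimes R^{\otimes n}$ in the pair $(R, M)$. This identifies the two complexes $\coli_i HH(R_i, M_i)$ and $HH(R, M)$ termwise, and the identification is compatible with $b$ by naturality.

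Finally, to conclude $HH_n(R, M) = \coli_i HH_n(R_i, M_i)$, I would invoke the fact that filtered colimits are exact in the category of $k$-vector spaces (indeed in any module category over a ring), so they commute with taking homology of a chain complex. Combining this with the previous step gives
\[
HH_n(R, M) = H_n\bigl(HH(R, M)\bigr) = H_n\bigl(\coli_i HH(R_i, M_i)\bigr) = \coli_i H_n\bigl(HH(R_i, M_i)\bigr) = \coli_i HH_n(R_i, M_i),
\]
which is the desired conclusion. There is no real obstacle here; the only mild subtlety is making sure the directed system of bimodules produces a well-defined $R$-bimodule structure on the colimit $M$ in a way compatible with the boundary, but this is automatic from the hypothesis that the transition maps for $M_i$ are bimodule maps over the transition maps for $R_i$.
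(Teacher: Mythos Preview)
Your argument is correct and is exactly the elaboration of what the paper asserts: the paper does not give a proof but simply records the lemma as an immediate consequence of the remark preceding it, namely that the Hochschild complex itself commutes with filtering colimits. Your proof spells this out (tensor products over $k$ commute with filtered colimits termwise, and filtered colimits are exact so commute with homology), which is the intended justification.
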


Let $R_i$  be a $k$-algebra and $M_i$ an $R_i$- bimodule $(i=1,2)$.
The K\"unneth formula establishes a natural isomorphism
(\cite[9.4.1]{chubu})
\[
HH_n(R_1\otimes R_2, M_1\otimes M_2)\cong
\bigoplus_{p=0}^nHH_p(R_1,M_1)\otimes HH_{n-p}(R_2,M_2)
\]
Another fundamental fact about Hochschild homology which we shall
need is Morita invariance. Let $R$ and $S$ be Morita equivalent
algebras, and let $P\in R\otimes S^{op}-\mod$ and $Q\in S\otimes
R^{op}-\mod$ implement the Morita equivalence. Then (\cite[Thm.
9.5.6]{chubu})
\begin{equation}\label{morita}
HH_n(R,M)=HH_n(S,Q\otimes_RM\otimes_RP)
\end{equation}

\begin{lem}\label{lem:hhtenso}
Let $R_1,\dots,R_n$ and $S_1,\dots,S_m,\dots$ be a finite and an
infinite sequence of algebras, and let $R=\bigotimes_{i=1}^nR_i$,
$S_{\le m}=\bigotimes_{j=1}^{m}S_j$ and $S=\bigotimes_{j=1}^\infty
S_j$. Assume:
\begin{enumerate}
\item $HH_q(R_i)\ne 0\ne HH_q(S_j)$ \quad ($q=0,1$), ($1\le i\le n$), ($1\le j$).
\item $HH_p(R_i)=HH_p(S_j)=0$ for $p\ge 2$, $1\le i\le n$, $1\le
j$.
\item $n\ne m$.
\end{enumerate}
Then no two of $R$, $S_{\le m}$ and $S$ are Morita equivalent.
\end{lem}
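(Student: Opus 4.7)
The plan is to invoke Morita invariance of Hochschild homology, so that it suffices to distinguish $HH_*(R)$, $HH_*(S_{\le m})$, and $HH_*(S)$ pairwise as graded $k$-vector spaces; the two main tools will be the K\"unneth formula and Lemma~\ref{lem:coli}.

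For the finite tensor products, iterated K\"unneth combined with hypothesis (2) yields
\[
HH_q(R) \cong \bigoplus_{\substack{\epsilon_i \in \{0,1\} \\ \epsilon_1 + \cdots + \epsilon_n = q}} HH_{\epsilon_1}(R_1) \otimes \cdots \otimes HH_{\epsilon_n}(R_n),
\]
and the analogous decomposition for $S_{\le m}$. Both graded spaces are thus supported in a finite range of degrees, and by hypothesis (1) their top-degree parts $HH_n(R) = HH_1(R_1) \otimes \cdots \otimes HH_1(R_n)$ and $HH_m(S_{\le m}) = HH_1(S_1) \otimes \cdots \otimes HH_1(S_m)$ are nonzero. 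Since $n \ne m$ by (3), the top nonvanishing Hochschild degrees of $R$ and $S_{\le m}$ differ, so these algebras are not Morita equivalent.

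To handle $S$, I would apply Lemma~\ref{lem:coli} to write $HH_q(S) = \coli_\ell HH_q(S_{\le \ell})$ and show this is nonzero for every $q \ge 0$; this will distinguish $S$ from both $R$ and $S_{\le m}$, whose Hochschild homology vanishes above a finite degree. For each $\ell \ge q$ the K\"unneth decomposition of $HH_q(S_{\le \ell})$ contains the nonzero summand $HH_1(S_1) \otimes \cdots \otimes HH_1(S_q) \otimes HH_0(S_{q+1}) \otimes \cdots \otimes HH_0(S_\ell)$, and the transition map $HH_q(S_{\le \ell}) \to HH_q(S_{\le \ell+1})$ induced by the algebra inclusion $a \mapsto a \otimes 1$ corresponds under K\"unneth to tensoring with the class $[1] \in HH_0(S_{\ell+1}) = S_{\ell+1}/[S_{\ell+1}, S_{\ell+1}]$.

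The main obstacle will be to verify that this class $[1]$ is nonzero in each $HH_0(S_j)$, so that tensoring with it is injective and the chosen summand persists in the colimit. This is not formally forced by the bare hypothesis $HH_0(S_j) \ne 0$ (e.g.\ $M_r(k)$ in characteristic dividing $r$ shows that $HH_0 \ne 0$ does not imply $[1] \ne 0$); for the paper's applications to tensor products of Leavitt path algebras it should be read off the explicit description of $HH_0$ obtained earlier, where the class of the identity is identified with a nontrivial generator. Granting this, $HH_q(S) \ne 0$ for every $q \ge 0$ and the three algebras are pairwise non-Morita-equivalent.
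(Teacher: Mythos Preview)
Your approach is exactly the paper's: K\"unneth gives the top nonvanishing Hochschild degree of $R$ and of $S_{\le m}$, and for $S$ the paper writes
\[
HH_q(S)=\bigoplus_{J\subset\N,\,|J|=q}\Bigl(\bigotimes_{j\in J}HH_1(S_j)\Bigr)\otimes\Bigl(\bigotimes_{j\notin J}HH_0(S_j)\Bigr)
\]
and simply asserts this is nonzero for all $q\ge 1$, without addressing the point you raise about $[1]$. So you have been more careful than the paper here.

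The gap you flag is real, and worse than you suggest. Your example $M_r(k)$ fails hypothesis~(1) since $HH_1(M_r(k))=0$, but $M_r(k[t,t^{-1}])$ with $\char k\mid r$ satisfies all hypotheses and has $[1]=0$ in $HH_0$. More to the point, your proposed resolution (read off $[1]\ne 0$ from the explicit $HH_0$ of Leavitt path algebras) fails for the paper's central example: in $L_2$ one has $1=x_1x_1^*+x_2x_2^*$ and $[x_ix_i^*]=[x_i^*x_i]=[1]$, so $[1]=2[1]$ and hence $[1]=0$ in $HH_0(L_2)$; equivalently ${}_0HH_0(L_2)=\coker(1-2:k\to k)=0$. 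Thus for $S_j=L_2$ every transition map on $HH_*$ vanishes and $HH_*\bigl(\bigotimes_1^\infty L_2\bigr)=0$, contradicting the claim that $HH_q(S)\ne 0$ for all $q$. The lemma is still true, via a dichotomy: if $[1_{S_j}]=0$ for infinitely many $j$ then cofinally many transition maps vanish and $HH_*(S)=0$, while $HH_0(R)=\bigotimes_i HH_0(R_i)\ne 0$ (finite tensor product of nonzero spaces) and likewise $HH_0(S_{\le m})\ne 0$, so $S$ is distinguished from both; if $[1_{S_j}]\ne 0$ for all but finitely many $j$, run your colimit argument on the cofinal tail past the bad indices to get $HH_q(S)\ne 0$ for every $q\ge 0$.
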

\begin{proof}
By the K\"unneth formula, we have
\[
HH_n(R)=\bigotimes_{i=1}^n HH_1(R_i)\ne 0, \qquad HH_p(R)=0\qquad
p>n
\]
By the same argument, $HH_p(S_{\le m})$ is nonzero for $p=m$, and
zero for $p>m$. Hence if $n\ne m$, $R$ and $S_{\le m}$ do not have
the same Hochschild homology and therefore they cannot be Morita
equivalent, by \eqref{morita}. Similarly, by Lemma \ref{lem:coli},
we have
$$HH_n(S)= \bigoplus_{J\subset \N, |J|=n} \big(\bigotimes_{j\in J}
HH_1(S_j)\big)\otimes \big(\bigotimes _{j\notin J} HH_0(S_j)\big)\,
,
$$
so that $HH_n(S)$ is nonzero for all $n\ge 1$, and thus it cannot be
Morita equivalent to either $R$ or $S_{\le m}$.
\end{proof}

\section{Hochschild homology of crossed products}

Let $R$ be a unital algebra and $G$ a group acting  on $R$ by
algebra automorphisms. Form the crossed-product algebra $S=R\rtimes
G$, and consider the Hochschild complex $HH(S)$. For each conjugacy
class $\xi$ of $G$, the graded submodule $HH^{\xi}(S)\subset HH(S)$
generated in degree $n$ by the elementary tensors $a_0\rtimes
g_0\otimes\dots\otimes a_n\rtimes g_n$ with $g_0\cdots g_n\in\xi$ is
a subcomplex, and we have a direct sum decomposition
$HH(S)=\bigoplus_{\xi}HH^{\xi}(S)$. The following theorem of Lorenz
describes the complex $HH^{\xi}(S)$ corresponding to the conjugacy
class $\xi=[g]$  of an element $g\in G$ as
hyperhomology over the centralizer subgroup $Z_g\subset G$.

\begin{thm}\label{thm:lorenz}\cite{lore}.
Let $R$ be a unital $k$-algebra, $G$ a group acting on $R$ by
automorphisms, $g\in G$ and $Z_g\subset G$ the centralizer subgoup.
Let $S=R\rtimes G$ be the crossed product algebra, and $HH^{\langle
g\rangle}(S)\subset HH(S)$ the subcomplex described above. Consider
the $R$-submodule $S_g=R\rtimes g\subset S$. Then there is a
quasi-isomorphism
\[
HH^{[g]}(S)\weq \H(Z_g,HH(R,S_g))
\]
In particular we have a spectral sequence
\[
E^2_{p,q}=H_p(Z_g,HH_q(R,S_g))\Rightarrow HH_{p+q}^{[g]}(S)
\]
\end{thm}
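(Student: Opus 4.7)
The plan is to realize $HH(S)$ via a Grothendieck/Lyndon--Hochschild--Serre--style spectral sequence and then Shapiro-decompose by conjugacy classes.

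First, establish a natural quasi-isomorphism $HH(S)\weq \H(G,HH(R,S))$, where $G$ acts on $HH(R,S)$ by simultaneous conjugation, $g\cdot(s_0\otimes a_1\otimes\cdots\otimes a_n)=(gs_0g^{-1})\otimes(g\cdot a_1)\otimes\cdots\otimes(g\cdot a_n)$, with $S$ viewed as an $R$-bimodule in the natural way. This can be proved by writing down a ``mixed'' bar bicomplex resolving $S$ over $S^e$: combine the bar resolution of $R$ over $R^e$ (twisted by the $G$-action so as to land in $S^e$-modules) with the bar resolution of the trivial $G$-module. Computing $\tor^{S^e}(S,S)$ from this bicomplex by filtering in one direction yields $HH(S)$, while filtering in the other direction produces $\H(G,HH(R,S))$.

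Second, decompose $HH(R,S)=\bigoplus_{h\in G}HH(R,S_h)$ as a $G$-module, where each $S_h=R\rtimes h$ is the $R$-bimodule with action $a\cdot(r\rtimes h)\cdot b=ar(h\cdot b)\rtimes h$. The $G$-action on $HH(R,S)$ permutes these summands by $h\mapsto ghg^{-1}$, so grouping by conjugacy class gives $HH(R,S)=\bigoplus_{\xi}\bigl(\bigoplus_{h\in\xi}HH(R,S_h)\bigr)$ as $G$-subcomplexes. Taking hyperhomology termwise yields a decomposition of $\H(G,HH(R,S))$ by conjugacy class that matches, under the quasi-isomorphism of the first step, the decomposition $HH(S)=\bigoplus_\xi HH^\xi(S)$ described above the theorem. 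Fixing $\xi=[g]$, the $G$-module $\bigoplus_{h\in[g]}HH(R,S_h)$ is induced from the $Z_g$-module $HH(R,S_g)$ via the bijection $G/Z_g\cong[g]$, $\gamma Z_g\mapsto \gamma g\gamma^{-1}$. Shapiro's lemma in its hyperhomology form then gives $\H\bigl(G,\bigoplus_{h\in[g]}HH(R,S_h)\bigr)\weq \H(Z_g,HH(R,S_g))$. Combining these two steps yields the desired quasi-isomorphism $HH^{[g]}(S)\weq \H(Z_g,HH(R,S_g))$, and the spectral sequence is the standard hyperhomology spectral sequence of a complex of $Z_g$-modules.

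The main obstacle is the first step: writing down the mixed bar bicomplex and verifying that its two spectral sequences converge to $HH(S)$ and $\H(G,HH(R,S))$, respectively. The compatibility of the conjugacy-class decompositions on both sides, although essentially bookkeeping, also requires some care: one must track how the cyclic face of $HH(S)$ decomposes into the combination of the internal Hochschild differential on the $R$-direction of the bicomplex and the group differential on the $G$-direction, so as to be sure that the conjugacy-class summands really do match up across the quasi-isomorphism.
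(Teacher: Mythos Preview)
The paper does not prove this theorem: it is quoted from Lorenz \cite{lore}, with the remark that Lorenz states only the spectral sequence but that his argument actually produces the quasi-isomorphism, an explicit formula for which appears in \cite[Lemma 7.2]{proper}. So there is no ``paper's own proof'' to compare against.

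Your sketch is the standard route and is essentially what Lorenz does: identify $HH(S)$ with $\H(G,HH(R,S))$ via a double complex that mixes the bar resolution of $R$ with the bar resolution of $G$, then split $HH(R,S)=\bigoplus_h HH(R,S_h)$ as a $G$-module with $G$ permuting the summands by conjugation, and finally apply Shapiro's lemma to each conjugacy-class block. The only caution is your phrasing of the first step: it is cleaner to say that one computes $\tor^{S^e}_*(S,S)$ from a resolution of $S$ as an $S^e$-module built by tensoring the $R^e$-bar resolution with the $kG$-bar resolution (using $S^e\cong R^e\rtimes(G\times G)$), rather than speaking of ``filtering in two directions'' of a single bicomplex, since one filtration collapses to $HH(S)$ only after identifying the total complex with the Hochschild complex of $S$ up to quasi-isomorphism. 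Your acknowledgement that tracking the cyclic face map through this identification is the delicate bookkeeping step is exactly right; this is precisely where the explicit formula referred to in the paper's remark becomes useful.
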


\begin{rem}
Lorenz formulates his result in terms of the spectral sequence
alone, but his proof shows that there is a quasi-isomorphism as
stated above; an explicit formula is given
for example in the proof of \cite[Lemma 7.2]{proper}.

\end{rem}

Let $A$ be a not necessarily unital $k$-algebra, write $\tilde{A}$
for its unitalization. Recall from \cite{wodex} that $A$ is called
\emph{$H$-unital} if the groups $\tor_n^{\tilde{A}}(k,A)$ vanish for
all $n\ge 0$. Wodzicki proved in \cite{wodex} that $A$ is $H$-unital
if and only if for every embedding $A\triqui R$ of $A$ as a
two-sided ideal of a unital ring $R$, the map
\[
HH(A)\to HH(R:A)=\ker(HH(R)\to HH(R/A))
\]
is a quasi-isomorphism.

\begin{lem}\label{lem:hlorenz}
Theorem \ref{thm:lorenz} still holds if the condition that $R$ be
unital is replaced by the condition that it be $H$-unital.
\end{lem}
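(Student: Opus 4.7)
\emph{Plan.} My approach is to reduce to the unital case by passing to the unitalization $\tilde R$ and invoking Wodzicki's excision theorem.

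Step 1. Extend the $G$-action on $R$ to the unitalization $\tilde R$ by fixing $1$, and form the unital crossed product $\tilde S := \tilde R\rtimes G$. Then $S\triqui \tilde S$ as a two-sided ideal, with quotient $\tilde S/S\cong k\rtimes G=k[G]$; restricting to $g$-components gives the short exact sequence of $\tilde R$-bimodules $0\to S_g\to \tilde S_g\to k\cdot g\to 0$. I will also need that $S$ is itself $H$-unital; this should follow from $H$-unitality of $R$ by an analysis of the bar complex using the $k$-module identification $S^{\otimes n}\cong R^{\otimes n}\otimes k[G^n]$, although some care is required because of the twist in the multiplication of $S$.

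Step 2. Apply Theorem \ref{thm:lorenz} to the unital algebras $\tilde S$ and $k[G]=k\rtimes G$. By the naturality of the explicit quasi-isomorphism described in the preceding remark (see \cite[Lemma 7.2]{proper}) under the $G$-equivariant algebra map $\tilde R\to k$, we obtain the commutative diagram
\[
\xymatrix{
HH^{[g]}(\tilde S) \ar[r]^-{\sim} \ar[d] & \H(Z_g, HH(\tilde R, \tilde S_g)) \ar[d] \\
HH^{[g]}(k[G]) \ar[r]^-{\sim} & \H(Z_g, HH(k, k\cdot g)).
}
\]
Taking homotopy kernels of the vertical arrows, the left column gives $HH^{[g]}(\tilde S:S)$, which is quasi-isomorphic to $HH^{[g]}(S)$ by Wodzicki (applied conjugacy-class-by-conjugacy-class, using the $H$-unitality of $S$). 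The right column gives $\H(Z_g, C^\bullet_g)$, where $C^\bullet_g$ is the homotopy kernel of the bimodule Hochschild complex map $HH(\tilde R, \tilde S_g)\to HH(k, k\cdot g)$ (here hyperhomology is used as an exact functor on chain complexes).

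Step 3. Identify $C^\bullet_g$ with $HH(R, S_g)$; combining with the preceding steps then yields the desired quasi-isomorphism $HH^{[g]}(S)\weq \H(Z_g, HH(R, S_g))$. I expect this final identification to be the main obstacle: it is a Hochschild-with-coefficients version of Wodzicki's theorem, asserting that the natural chain map $HH(R, S_g)\hookrightarrow C^\bullet_g$ is a quasi-isomorphism, or equivalently that $HH(R, S_g)\to HH(\tilde R,\tilde S_g)\to HH(k, k\cdot g)$ is a distinguished triangle. This should follow from $H$-unitality of $R$ together with the bar-resolution description of Hochschild homology, but requires careful bookkeeping beyond the classical trivial-coefficient case.
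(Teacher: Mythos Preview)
Your approach is correct and follows the same route as the paper, only with far more detail. The paper's entire proof is: ``Follows from Theorem~\ref{thm:lorenz} and the fact, proved in \cite[Prop.~A.6.5]{proper}, that $R\rtimes G$ is $H$-unital if $R$ is.'' In other words, the key fact you identify in Step~1 --- that $S=R\rtimes G$ inherits $H$-unitality from $R$ --- is precisely what the paper cites, and the rest of the reduction to the unital case via $\tilde R\rtimes G$ and $k[G]$ is left implicit.

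Your Step~3 concern is honest and well-placed: the coefficient version of excision you isolate (that $HH(R,S_g)\to HH(\tilde R,\tilde S_g)\to HH(k,k\cdot g)$ is a triangle) is indeed what is needed on the right-hand column, and the paper's two-line proof glosses over it entirely. It does follow from $H$-unitality of $R$ by standard bar-complex arguments (this is essentially the content of Wodzicki's theorem with bimodule coefficients), so there is no real gap --- but you are right that it is not literally the trivial-coefficient statement quoted in the paper.
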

\begin{proof} Follows from Theorem \ref{thm:lorenz} and the fact,
proved in \cite[Prop. A.6.5]{proper}, that $R\rtimes G$ is
$H$-unital if $R$ is.
\end{proof}

Let $R$ be a unital algebra, and $\phi:R\to pRp$ a corner
isomorphism. As in \cite{skew}, we consider the skew Laurent
polynomial algebra $R[t_+,t_-,\phi]$; this is the $R$-algebra
generated by elements $t_+$ and $t_-$ subject to the following
relations.
\begin{gather*}
t_+a=\phi(a)t_+\\
at_-=t_-\phi(a)\\
t_-t_+=1\\
t_+t_-=p
\end{gather*}
Observe that the algebra $S=R[t_+,t_-,\phi]$ is $\Z$-graded by
$\deg(r)=0$, $\deg(t_\pm)=\pm 1$. The homogeneous component of
degree $n$ is given by
\[
R[t_+,t_-,\phi]_n=\left\{\begin{matrix}t^{-n}_-R& n<0\\
R & n=0\\
Rt^n_+& n>0\\
\end{matrix}\right.
\]

\begin{prop}\label{prop:monlorenz}
Let $R$ be a unital ring, $\phi:R\to pRp$ a corner isomorphism, and
$S=R[t_+,t_-,\phi]$. Consider the weight decomposition
$HH(S)=\bigoplus_{m\in\Z}{}_{m}HH(S)$. There is a
quasi-isomorphism
\begin{equation}\label{map:monlorenz}
{\ \ }_{m}HH(S)\weq \Cone(1-\phi:HH(R,S_m)\to HH(R,S_m))
\end{equation}
\end{prop}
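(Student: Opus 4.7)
The plan is to mimic the proof of Lorenz's theorem (\ref{thm:lorenz}) in the special case $G=\Z$, adapted to accommodate the fact that $\phi$ is a corner isomorphism rather than a genuine automorphism. For $G=\Z$, group homology degenerates: $\H(\Z,M)\weq\Cone(1-t\colon M\to M)$ is a two-term complex, and the conjugacy-class decomposition of $HH(R\rtimes\Z)$ coincides with the weight decomposition by $m\in\Z$. Thus the conclusion of Proposition \ref{prop:monlorenz} is precisely the formula one would obtain from Lemma \ref{lem:hlorenz} for the crossed product $R\rtimes\Z$; the task is to show that the same result holds when $S$ fails to be a crossed product, since $t_+t_-=p$ rather than $1$.

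My concrete strategy is to construct a short $S$-bimodule resolution of $S$ of length one,
\[
0 \longrightarrow P \xrightarrow{\ d\ } Q \xrightarrow{\mu} S \longrightarrow 0,
\]
in which $P,Q$ are built from $S\otimes_R S$ (possibly with the corner $p$ inserted, so as to faithfully encode $t_+t_-=p$ and $t_-t_+=1$ in $d$), and $\mu$ is multiplication. Applying $S\otimes_{S^e}(-)$ and combining with the bar resolution of $R$ over $R^e$ to identify $S\otimes_{S^e}(S\otimes_R R^{\otimes\bullet}\otimes_R S)=HH(R,S)$, the resolution produces a quasi-isomorphism
\[
HH(S) \weq \Cone\bigl(d_\ast\colon HH(R,S)\longrightarrow HH(R,S)\bigr).
\]
A direct computation (using $t_+a=\phi(a)t_+$ and $at_-=t_-\phi(a)$) should then show that $d_\ast$ sends a Hochschild chain $a_0\otimes a_1\otimes\cdots\otimes a_n$ to $a_0\otimes\cdots\otimes a_n-\phi(a_0)\otimes\cdots\otimes\phi(a_n)$, so that $d_\ast=1-\phi_\ast$. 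Since every step respects the $\Z$-grading on $S$, the quasi-isomorphism decomposes as a direct sum of its weight-$m$ pieces; the weight-$m$ summand of $HH(R,S)$ is $HH(R,S_m)$, and the restriction of $\phi_\ast$ is the map denoted $\phi$ in the statement. This yields the desired formula.

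The main obstacle I expect is the construction and verification of the bimodule resolution in the corner case: the classical length-one resolution for skew Laurent polynomial rings is available only when $\phi$ is invertible, and here the differential $d$ must be recalibrated so as to be well-defined on tensor products over $R$ despite $\phi$ landing only in $pRp$. The correct resolution is presumably the one implicit in the Pimsner--Voiculescu-type computation of $K_\ast(S)$ in \cite{skew}; once it is in hand, tracking the weight grading and identifying $d_\ast$ with $1-\phi_\ast$ is a formal matter following the pattern of Lorenz's argument.
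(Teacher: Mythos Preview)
Your approach is genuinely different from the paper's, and the gap you yourself flag is real and not resolved by the reference you give.

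The paper does \emph{not} attempt to build a length-one $S$-bimodule resolution in the corner case. Instead it reduces to the automorphism case, where Lorenz's theorem applies directly. Concretely: form the filtered colimit $A=\coli(R\xrightarrow{\phi}R\xrightarrow{\phi}\cdots)$, on which $\phi$ induces a genuine automorphism $\hat\phi$. Since $A$ is $H$-unital, Lemma~\ref{lem:hlorenz} gives the desired cone description for $B=A\rtimes_{\hat\phi}\Z$. It then remains to check that the comparison maps $HH(S)\to HH(B)$ and $\Cone(1-\phi\colon HH(R,S_m))\to\Cone(1-\hat\phi\colon HH(A,B_m))$ are quasi-isomorphisms. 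The paper observes that the corresponding $K$-theory statement is proved in \cite[Thm.~3.6]{abc}, and that the argument there uses only that the functor commutes with filtered colimits and is matrix-invariant on rings where it is excisive---properties Hochschild homology shares---so it transports verbatim.

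Your strategy, by contrast, would be more self-contained if it worked, avoiding the passage to a non-unital colimit and the appeal to \cite{abc}. But the crucial step---producing a short $S^e$-projective resolution of $S$ relative to $R$ when $t_+t_-=p\neq 1$---is left open, and your suggestion that it is ``implicit in \cite{skew}'' is not borne out: the $K$-theory computation for $R[t_+,t_-,\phi]$ in the literature (in \cite{abc}, which is what the paper actually cites) goes through exactly the colimit-to-automorphism trick above, not through a direct resolution. So as written your proposal is a plan whose hard step is neither carried out nor reducible to a citation; the paper's route sidesteps that obstacle entirely.
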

\begin{proof}
If $\phi$ is an automorphism, then $S=R\rtimes_\phi\Z$, the right
hand side of \eqref{map:monlorenz} computes $\H(\Z,HH(R,S_m))$, and
the proposition becomes the particular case $G=\Z$ of Theorem
\ref{thm:lorenz}. In the general case, let $A$ be the colimit of the
inductive system
\[
\xymatrix{R\ar[r]^\phi& R\ar[r]^\phi & R\ar[r]^\phi&\dots}
\]
Note that $\phi$ induces an automorphism $\hat{\phi}:A\to A$. Now
$A$ is $H$-unital, since it is a filtering colimit of unital
algebras, and thus the assertion of the proposition is true for the
pair $(A,\hat{\phi})$, by Lemma \ref{lem:hlorenz}. Hence it suffices
to show that for $B=A\rtimes_{\hat{\phi}}\Z$ the maps $HH(S)\to
HH(B)$ and $\Cone(1-\phi:HH(R,S_m)\to HH(R,S_m))\to
\Cone(1-\phi:HH(A,B_m)\to HH(A,B_m))$ $(m\in\Z)$ are
quasi-isomorphisms. The analogous property for $K$-theory is shown
in the course of the third step of the proof of \cite[Thm.
3.6]{abc}. Since the proof in {\em loc. cit.} uses only that
$K$-theory commutes with filtering colimits and is matrix invariant
on those rings for which it satisfies excision, it applies verbatim
to Hochschild homology. This concludes the proof.
\end{proof}

\section{Hochschild homology of the Leavitt path algebra}

Let $E=(E_0,E_1,r,s)$ be a finite quiver and let $\hat{E}=(E_0,
E_1\sqcup E_1^*, r,s)$ be the double of $E$, which is the quiver
obtained from $E$ by adding an arrow $\alpha^*$ for each arrow
$\alpha \in E_1$, going in the opposite direction. The {\it Leavitt
path algebra}  of $E$ is the algebra $L(E)$ with one generator for
each arrow $\alpha\in \hat{E}_1$ and one generator $p_i$ for each
vertex $i\in E_0$, subject to the following relations
\begin{gather*}
                      p_ip_j=\delta_{i,j}p_i\, , \qquad (i,j\in E_0)\\
                      p_{s(\alpha)}\alpha =\alpha =\alpha p_{r(\alpha )}\, ,\qquad (\alpha\in
                      \hat{E}_1)\\
                      \alpha^*\beta=\delta_{\alpha,\beta}p_{r(\alpha)}\, , \qquad (\alpha,\beta \in E_1)\\
                      p_i=\sum_{\alpha \in E_1, s(\alpha)=i}\alpha \alpha^* \, , \qquad (i\in E_0\setminus \Si (E))
\end{gather*}
The algebra $L=L(E)$ is equipped with a $\Z$-grading. The grading
is determined by $|\alpha|=1$, $|\alpha^*|=-1$, for $\alpha \in
E_1$. Let $L_{0,n}$ be the linear span of all the elements of the
form $\gamma \nu^*$, where $\gamma$ and $\nu$ are paths with
$r(\gamma)=r(\nu)$ and $|\gamma |=|\nu |=n$. By \cite[proof of
Theorem 5.3]{AMP}, we have $L_0=\bigcup _{n=0}^{\infty} L_{0,n}$.
For each $i$ in $E_0$, and each $n\in \Z^+$, let us denote by
$P(n,i)$ the set of paths $\gamma$ in $E$ such that $|\gamma |=n$
and $r(\gamma)=i$.  The algebra $L_{0,0}$ is isomorphic to $\prod
_{i\in E_0}k$. In general the algebra $L_{0,n}$ is isomorphic to
\begin{equation}\label{L0n}
\Big[ \prod _{m=0}^{n-1}\big( \prod _{i\in \Si
(E)}M_{|P(m,i)|}(k)\big)\Big] \times \Big[ \prod _{i\in
E_0}M_{|P(n,i)|}(k) \Big].
\end{equation}

The transition homomorphism $L_{0,n}\to L_{0,n+1}$ is the identity
on the factors $$\prod _{i\in \Si (E)}M_{|P(m,i)|}(k),$$ for $0\le
m\le n-1$, and also on the factor
$$\prod _{i\in \Si
(E)}M_{|P(n,i)|}(k)$$ of the last term of the displayed formula. The
transition homomorphism
$$\prod_{i\in E_0\setminus \Si (E)}M_{|P(n,i)|}(k)\to \prod_ {i\in
E_0}M_{|P(n+1,i)|}(k)$$ is a block diagonal map induced by the
following identification in $L(E)_0$: A matrix unit in a factor
$M_{|P(n,i)|}(k)$, where $i\in E_0\setminus \Si (E)$, is a monomial
of the form $\gamma\nu ^*$, where $\gamma$ and $\nu$ are paths of
length $n$ with $r(\gamma)=r(\nu)=i$. Since $i$ is not a sink, we
can enlarge the paths $\gamma$ and $\nu$ using the edges that $i$
emits, obtaining paths of length $n+1$, and the last relation in the
definition of $L(E)$ gives
\[
\gamma \nu^*=\sum _{\{\alpha\in E_1\mid s(\alpha )=i\}}(\gamma
\alpha)(\nu\alpha)^*.
\]

Assume $E$ has no sources. For each $i\in E_0$, choose an arrow
$\alpha_i$ such that $r(\alpha_i)=i$. Consider the elements
\[
t_+=\sum_{i\in E_0}\alpha_i,\qquad t_-=t_+^*
\]
One checks that $t_-t_+=1$. Thus, since $|t_\pm|=\pm 1$, the
endomorphism
\begin{equation}\label{map:phi}
\phi:L\to L, \qquad \phi(x)=t_+xt_-
\end{equation}
is homogeneous of degree $0$ with respect to the $\Z$-grading. In
particular it restricts to an endomorphism of $L_0$. By \cite[Lemma
2.4]{skew}, we have
\begin{equation}\label{skewle}
L=L_0[t_+,t_-,\phi].
\end{equation}

Consider the matrix $N_E'=[n_{i,j}]\in M_{e_0}\Z$  given by
\[
n_{i,j}=\#\{\alpha\in E_1:s(\alpha)=i,\quad r(\alpha)=j\}
\]
Let $e_0'=|\Si (E)|$. We assume that $E_0$ is ordered so that the
first $e_0'$ elements of $E_0$ correspond to its sinks. Accordingly,
the first $e_0'$ rows of the matrix $N'_E$ are $0$. Let $N_E$ be the
matrix obtained by deleting these $e_0'$ rows. The matrix that
enters the computation of the Hochschild homology of the Leavitt
path algebra is
$$\begin{pmatrix}
0 \\1_{e_0-e_0'}
\end{pmatrix}-N_E^t\colon \Z^{e_0-e_0'}\longrightarrow \Z^{e_0}.$$
By a slight abuse of notation, we will write $1-N_E^t$ for this
matrix. Note that $1-N_E^t\in M_{e_0\times (e_0-e_0')}(\Z)$. Of
course $N_E=N_E'$ in case $E$ has no sinks.

\begin{thm}\label{thm:hhl1}
Let $E$ be a finite quiver without sources, and let $N=N_E$. For
each $i\in E_0\setminus\Si(E)$, and $m\ge 1$, let $V_{i,m}$ be the
vector space generated by all closed paths $c$ of length $m$ with
$s(c)=r(c)=i$. Let $\Z=<\sigma>$ act on
\[
V_m=\bigoplus_{i\in E_0\setminus\Si(E)}V_{i,m}
\]
by rotation of closed paths. We have:
\[
{}_mHH_n(L(E))=\left\{\begin{matrix} \coker(1-\sigma:V_{|m|}\to V_{|m|})& n=0, m\neq 0\\
                                     \coker(1-N^t)& n=m=0\\
                                     \ker(1-\sigma:V_{|m|}\to V_{|m|})& n=1, m\neq 0\\
                                     \ker(1-N^t)& n=1, m=0\\
                                     0 & n\notin\{0,1\}\end{matrix}\right.
\]
\end{thm}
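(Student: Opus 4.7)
The plan is to exploit the skew Laurent polynomial presentation $L(E)=L_0[t_+,t_-,\phi]$ from \eqref{skewle}. Proposition~\ref{prop:monlorenz} then yields, for each weight $m\in\Z$, a quasi-isomorphism
\[
{}_m HH(L(E))\weq \Cone\bigl(1-\phi\colon HH(L_0,L_m)\to HH(L_0,L_m)\bigr),
\]
reducing the theorem to a computation of $HH(L_0,L_m)$ together with its $\phi$-action.

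First I would show that $HH_q(L_0,L_m)=0$ for all $q\ge 1$. By \eqref{L0n}, $L_0=\bigcup_n L_{0,n}$ is a filtered union of finite products of matrix algebras over $k$, and $L_m$ inherits a compatible filtration. Each $L_{0,n}$ is separable, so $HH_q(L_{0,n},L_{m,n})=0$ for $q\ge 1$, and Lemma~\ref{lem:coli} propagates this to $L_0$. The long exact sequence of the cone then collapses, giving
\[
{}_m HH_0(L(E))=\coker(1-\phi_*),\quad {}_m HH_1(L(E))=\ker(1-\phi_*),\quad {}_m HH_n(L(E))=0\ (n\ge 2).
\]

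Next I would compute $HH_0(L_0,L_m)=L_m/[L_0,L_m]$ and the induced map $\phi_*$. At level $n$, the Morita isomorphism writes $HH_0(L_{0,n},L_{m,n})$ as a direct sum, indexed by the matrix factors of $L_{0,n}$, of corners $\nu\nu^* L_{m,n}\nu\nu^*$, one per chosen rank-one projection $\nu\nu^*$. Passing to the filtered colimit and tracking the block-diagonal transition maps, one obtains, for $m\ne 0$, an identification with $V_{|m|}$: sink factors contribute nothing because a sink admits no closed path of positive length, while the non-sink summands stabilize to the space of closed paths at non-sink vertices. For $m=0$ the same analysis realises $HH_0(L_0)$ as the colimit of $HH_0(L_{0,n})$ whose transition map is $N^t$ on the non-sink part. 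The identification of $\phi_*$ then proceeds from the formula $\phi(x)=t_+xt_-$ of \eqref{map:phi} together with commutator identities in $L_m$, of which the prototypical case
\[
[\beta_1\beta_1^*,\,c]=c-\beta_1\,\sigma(c)\,\beta_1^*\qquad\text{for }c=\beta_1\cdots\beta_{|m|}\in V_{i,|m|}
\]
shows that, under the identification with $V_{|m|}$, $\phi_*$ corresponds to the cyclic rotation $\sigma$; an analogous analysis for $m=0$ identifies $\coker(1-\phi_*)$ and $\ker(1-\phi_*)$ with $\coker(1-N^t)$ and $\ker(1-N^t)$.

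The main obstacle is this last identification. Both the isomorphism $HH_0(L_0,L_m)\cong V_{|m|}$ (or its $m=0$ analogue) and the translation of $\phi_*$ into $\sigma$ (or into the shift matching $N^t$) require careful bookkeeping with the rank-one projections used in the Morita equivalences at each finite level, with the block-diagonal transition maps in the filtered colimit, and with the reduction of a general element $\gamma\nu^*\in L_m$ (which vanishes modulo $[L_0,L_m]$ unless $\gamma$ is a prefix-extension of $\nu$ by a closed path) to its canonical closed-path representative. Once these identifications are secured, the theorem follows from the collapsed cone formulas above.
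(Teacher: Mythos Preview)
Your outline is correct and follows essentially the same route as the paper: use \eqref{skewle} and Proposition~\ref{prop:monlorenz} to reduce to the cone of $1-\phi$ on $HH(L_0,L_m)$, kill higher $HH$ via separability of the finite levels $L_{0,n}$ and Lemma~\ref{lem:coli}, and then identify $HH_0(L_0,L_m)$ with $V_{|m|}$ (resp.\ with the $N^t$-picture for $m=0$) together with the $\phi$-action.

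One small point of bookkeeping worth flagging: in the paper the computation is carried out at each finite level, where the \emph{inclusion} $HH_0(L_{0,n},L_{m,n})\to HH_0(L_{0,n+1},L_{m,n+1})$ is the cyclic rotation $\sigma$ and the map induced by $\phi$ is the \emph{identity}. Your commutator identity $[\beta_1\beta_1^*,c]=c-\beta_1\sigma(c)\beta_1^*$ is precisely what makes the inclusion act as $\sigma$ (it rewrites $c$ in the level-$(n+1)$ basis), not what makes $\phi$ act as $\sigma$. After passing to the colimit along these $\sigma$'s and identifying it with $V_{|m|}$, the induced $\phi_*$ does become $\sigma^{\pm 1}$, so your conclusion is right; just be careful that the identity you display is evidence for the transition map, and that a separate (easier) check shows $\phi(\alpha\theta\alpha^*)=(\alpha_{s(\alpha)}\alpha)\theta(\alpha_{s(\alpha)}\alpha)^*$ acts as the identity at each finite level.
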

\begin{proof}
Let $L=L(E)$, $P=P(E)\subset L$ the path algebra of $E$ and
$W_m\subset P$ be the subspace generated by all paths of length $m$.
For each fixed $n\ge 1$, and $m\in\Z$, consider the following
$L_{0,n}$-bimodule
\[
L_{m,n}=\left\{\begin{matrix}L_{0,n}W_mL_{0,n} & m>0\\
L_{0,n}W^*_{-m}L_{0,n} & m<0\end{matrix}\right.
\]
Write $L=L(E)$, and let ${}_mL$ be the homogeneous part of degree
$m$; we have
\[
{}_mL=\bigcup_{n\ge 1} L_{m,n}
\]
If $m$ is positive, then there is a basis of $L_{m,n}$ consisting of
the products $\alpha\theta\beta^*$ where each of $\alpha$, $\beta$
and $\theta$ is a path in $E$, $r(\alpha)=s(\theta)$,
$r(\beta)=r(\theta)$, $|\alpha|=|\beta|=n$ and $|\theta|=m$. Hence
the formula
\[
\pi(\alpha\theta\beta^*)=\left\{\begin{matrix} \theta & \text{ if }\alpha=\beta\\
                                                  0  & \text{else} \end{matrix}\right.
\]
defines a surjective linear map $L_{m,n}\to V_m$. One checks that
$\pi$ induces an isomorphism
\[
HH_0(L_{0,n},L_{m,n})\cong V_m \quad (m>0)
\]
Similarly if $m<0$, then
\[
HH_0(L_{0,n},L_{m,n})=V_{|m|}^*\cong V_{-m}.
\]
Next, by \eqref{L0n}, we have
\[
HH_0(L_{0,n})=k[E\setminus\Si(E)]\oplus\bigoplus_{i\in\Si(E)}k^{r(i,n)}
\]
Here
\[
r(i,n)=\max\{r\le n: P(r,i)\neq\emptyset\}
\]
Now note that, because $L_{0,n}$ is a product of matrix algebras, it
is separable, and thus $HH_1(L_{0,n},M)=0$ for any bimodule $M$. As
observed in \eqref{skewle}, for the automorphism \eqref{map:phi}, we
have $L=L_0[t_+,t_-,\phi]$. Hence in view of Proposition
\ref{prop:monlorenz} and Lemma \ref{lem:coli}, it only remains to
identify the maps $HH_0(L_{0,n},L_{m,n})\to
HH_0(L_{0,n+1},L_{m,n+1})$ induced by inclusion and by the
homomorphism $\phi$. One checks that for $m\ne 0$, these are
respectively the cyclic permutation and the identity $V_{|m|}\to
V_{|m|}$. The case $m=0$ is dealt with in the same way as in
\cite[Proof of Theorem 5.10]{abc}.
\end{proof}

\begin{cor}\label{cor:nonzero} Let $E$ be a finite quiver with at least one non-trivial closed path.
\item[i)] $HH_n(L(E))=0$ for $n\notin\{0,1\}$.
\item[ii)] ${}_mHH_*(L(E))\cong {}_{-m}HH_*(L(E))$ ($m\in\Z$).
\item[iii)] There exist $m>0$ such that ${}_mHH_0(L(E))$ and ${}_mHH_1(L(E))$
are both nonzero.
\end{cor}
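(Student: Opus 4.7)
The plan is to reduce to the no-source setting of Theorem \ref{thm:hhl1} and then read off the three assertions from its formulas.

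First, I would eliminate the sources of $E$ one at a time. A source $v$ of $E$ cannot lie on any closed path. If $v$ is not also a sink, then $p_v=\sum_{s(\alpha)=v}\alpha\alpha^*\in L(E)(1-p_v)L(E)$, so $1-p_v$ is a full homogeneous idempotent of degree $0$, and the graded corner isomorphism $L(E\setminus\{v\})\cong (1-p_v)L(E)(1-p_v)$ yields a graded Morita equivalence between $L(E)$ and $L(E\setminus\{v\})$. If $v$ is isolated, then $L(E)=L(E\setminus\{v\})\times k$, which only contributes a $k$-summand to ${}_0HH_0$. The iteration cannot exhaust $E$, because the hypothesized closed path $c$ uses no vertex that can ever become a source (each vertex on $c$ always has an incoming arrow from its predecessor in $c$); so we land on a non-empty finite sourceless quiver $E''$ still containing $c$. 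By graded Morita invariance via \eqref{morita}, the components ${}_mHH_n(L(E))$ and ${}_mHH_n(L(E''))$ agree away from $(n,m)=(0,0)$. We may therefore assume $E$ has no sources.

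Then (i) follows directly from the last case of Theorem \ref{thm:hhl1}, and the $k$-summands picked up in the reduction only add to ${}_0HH_0$. For (ii) with $m\ne 0$, both ${}_mHH_n(L(E))$ and ${}_{-m}HH_n(L(E))$ are computed from the same pair $\bigl(V_{|m|},\sigma\bigr)$ by the theorem; the case $m=0$ is tautological. (One may alternatively invoke the $k$-linear anti-automorphism of $L(E)$ exchanging $\alpha$ and $\alpha^*$, which reverses the $\Z$-grading and induces the isomorphism canonically.) For (iii) I would take $m>0$ equal to the length of some non-trivial closed path of $E$, so that $V_m\ne 0$. The rotation $\sigma$ permutes the finite basis of $V_m$ of closed paths of length $m$, hence has finite order; for any $\sigma$-orbit $O$, the sum $\sum_{c\in O}c$ is a positive sum of distinct basis vectors, so it is a nonzero $\sigma$-fixed element. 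Thus $\ker(1-\sigma)\ne 0$, and by finite-dimensionality $\dim\coker(1-\sigma)=\dim\ker(1-\sigma)\ne 0$ as well; Theorem \ref{thm:hhl1} then gives ${}_mHH_0(L(E))\ne 0\ne {}_mHH_1(L(E))$.

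The only nontrivial point is checking that the source-removal reduction is compatible with the internal $\Z$-grading, which reduces to the fact that the vertex idempotents $p_v$ are homogeneous of degree $0$; once this is in hand the rest is bookkeeping on the formulas of Theorem \ref{thm:hhl1}.
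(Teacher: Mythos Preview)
Your proof is correct and follows essentially the same strategy as the paper: reduce to the sourceless case via a full-corner Morita equivalence with respect to a homogeneous idempotent (the paper invokes \cite[Theorem 6.3]{abc} to do this in one step, while you remove sources iteratively), and then read off (i)--(iii) from Theorem~\ref{thm:hhl1}. Your argument for (iii) via orbit sums and the rank--nullity identity $\dim\ker(1-\sigma)=\dim\coker(1-\sigma)$ is a minor variant of the paper's primitive-closed-path argument, and both work equally well.
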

\begin{proof}
We first reduce to the case where the graph does not have sources.
By the proof of \cite[Theorem 6.3]{abc}, there is a finite complete
subgraph $F$ of $E$ such that $F$ has no sources, $F$ contains all
the non-trivial closed paths of $E$, $\Si (F)=\Si (E)$, and $L(F)$
is a full corner in $L(E)$ with respect to the homogeneous
idempotent $\sum _{v\in F^0} p_v$. It follows that $HH_*(L(E))$ and
$HH_*(L(F))$ are graded-isomorphic. Therefore we can assume that $E$
has no sources.

The first two assertions are already part of Theorem \ref{thm:hhl1}.
For the last assertion, let $\alpha$ be a primitive closed path in
$E$, and let $m=|\alpha|$. Let $\sigma$ be the cyclic permutation;
then $\{\sigma^i\alpha:i=0,\dots,m-1\}$ is a linearly independent
set. Hence $N(\alpha)=\sum_{i=0}^{m-1}\sigma^i\alpha$ is a nonzero
element of $V_m^\sigma={}_mHH_1(L(E))$. Since on the other hand $N$
vanishes on the image of $1-\sigma:V_m\to V_m$, it also follows that
the class of $\alpha$ in ${}_mHH_0(L(E))$ is nonzero.
\end{proof}

\section{Applications}

\begin{thm}\label{thm:tensofin}
Let $E_1,\dots,E_n$ and $F_1,\dots,F_m$ be finite quivers. Assume
that $n\ne m$ and that each of the $E_i$ and the $F_j$ has at least
one non-trivial closed path. Then the algebras
$L(E_1)\otimes\dots\otimes L(E_n)$ and $L(F_1)\otimes\dots\otimes
L(F_m)$ are not Morita equivalent.
\end{thm}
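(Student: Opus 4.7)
The plan is to apply Lemma \ref{lem:hhtenso} to the two families $\{L(E_i)\}_{i=1}^n$ and $\{L(F_j)\}_{j=1}^m$, using the finite-versus-finite part of that lemma (the infinite-sequence clause is not needed here). Hypothesis (3) of the lemma, $n\ne m$, is part of our assumption, so only hypotheses (1) and (2) require verification, namely that each Leavitt path algebra in sight has Hochschild homology vanishing above degree $1$ while being nonzero in both degrees $0$ and $1$.

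Vanishing in degrees $\ge 2$ is exactly the content of Corollary \ref{cor:nonzero}(i), which applies because each $E_i$ and each $F_j$ is assumed to contain a non-trivial closed path, giving hypothesis (2). For the nonvanishing of $HH_0$ and $HH_1$, Corollary \ref{cor:nonzero}(iii) supplies, for every such quiver $E$, a weight $m>0$ for which both ${}_mHH_0(L(E))$ and ${}_mHH_1(L(E))$ are nonzero; since $HH_q(L(E))=\bigoplus_{m\in\Z}{}_mHH_q(L(E))$, it follows that $HH_0(L(E))$ and $HH_1(L(E))$ are themselves nonzero. Applying this to every $E_i$ and every $F_j$ yields hypothesis (1).

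With all three hypotheses of Lemma \ref{lem:hhtenso} verified, the lemma directly delivers the conclusion that $L(E_1)\otimes\cdots\otimes L(E_n)$ and $L(F_1)\otimes\cdots\otimes L(F_m)$ are not Morita equivalent. There is essentially no obstacle beyond checking that the hypotheses line up, since all the substantive work has already been carried out in Theorem \ref{thm:hhl1} and Corollary \ref{cor:nonzero}, which compute $HH_*(L(E))$ weight by weight and pinpoint nonvanishing weights; the present theorem is then a direct packaging of those computations with the K\"unneth-based criterion of Lemma \ref{lem:hhtenso}.
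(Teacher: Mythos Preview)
Your proof is correct and follows exactly the same route as the paper, which simply says the result is immediate from Lemma~\ref{lem:hhtenso} and Corollary~\ref{cor:nonzero}(iii). You have merely spelled out more carefully that part~(i) of Corollary~\ref{cor:nonzero} is also needed for hypothesis~(2) of the lemma, which the paper leaves implicit.
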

\begin{proof}
Immediate from Lemma \ref{lem:hhtenso} and Corollary
\ref{cor:nonzero}(iii).
\end{proof}

\begin{exa}
It follows from Theorem 5.1 that $L_2$ and $L_2 \otimes_k L_2$ are
not Morita equivalent. There is another way of proving this, due to
Jason Bell and George Bergman \cite{BB}. By Theorem 3.3 of
\cite{BD}, $\mathrm{l.gl.dim}\, L_2 \le 1$. Using a module-theoretic
construction, Bell and Bergman show that $\mathrm{l.gl.dim(L_2
\otimes_k L_2)} \ge 2$, which forces $L_2$ and $L_2 \otimes_k L_2$
to be not Morita equivalent. Bergman then asked Warren Dicks whether
general results were known about global dimensions of tensor
products and was pointed to Proposition 10(2) of \cite{ERZ}, which
is an immediate consequence of Theorem XI.3.1 of \cite{CE}, and says
that if $k$ is a field and $R$ and $S$ are $k$-algebras, then
$\mathrm{l.gl.dim}\, R + \mathrm{w.gl.dim}\, S \le \mathrm{l.gl.dim}
(R \otimes_k S).$ Consequently, if $\mathrm{l.gl.dim}\, R < \infty$
and $\mathrm{w.gl.dim}\, S > 0$, then $\mathrm{l.gl.dim}\, R <
\mathrm{l.gl.dim}(R \otimes_k S)$; in particular, $R$ and $R
\otimes_k S$ are then not Morita equivalent. To see that
$\mathrm{w.gl.dim}\, L_2 > 0$, write $x_1$, $x_2$, $x_1^*$, $x_2^*$
for the usual generators of $L_2$ and use normal-form arguments to
show that $\{a \in L_2 \mid ax_1 = a+1\} = \emptyset$ and $\{b \in
L_2 \mid x_1b = b\} = \{0\}$. Hence, in $L_2$, $x_1-1$ does not have
a left inverse and is not a left zerodivisor (or see [4]) ; thus,
$\mathrm{w.gl.dim}\, L_2 > 0$.

\end{exa}
\comment{
\begin{exa} It follows from Theorem \ref{thm:tensofin} that the algebras
$L_2$ and $L_2\otimes L_2$ are not Morita equivalent. Here is
another way of proving this, due to Jason Bell and George Bergman
\cite{BB}. Since the weak global dimension of a tensor product of
algebras over a field is at least the sum of their global
dimensions, it suffices to show that $L_2$ has weak dimension $1$.
Since $L_2$ has global dimension $1$, it suffices to show it is not
von Neumann regular. To see this write $x_1,x_2,x_1^*,x_2^*$ for the
usual generators of $L_2$ and observe that (e.g. by the results of
\cite{AB1}) the element $1-x_1$ is a nonzero divisor which is not a
unit.
\end{exa}
} We denote by $L_{\infty}$ the unital algebra presented by
generators $x_1,x_1^*,x_2,x_2^*, \dots$ and relations
$x^*_ix_j=\delta _{i,j}1$.

\begin{prop}
\label{prop:linfi}
 Let $E$ be any finite quiver having at least one non-trivial closed path.
Then $L_{\infty}\otimes L(E)$ and  $L(E)$ are not Morita equivalent.
Similarly $L_{\infty}\otimes L_{\infty}$ and $L_{\infty}$ are not
Morita equivalent.
\end{prop}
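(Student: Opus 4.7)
The plan is to reduce to Lemma~\ref{lem:hhtenso} by establishing that $L_\infty$ has the same Hochschild profile as the algebras $L(E)$ in Corollary~\ref{cor:nonzero}, namely $HH_p(L_\infty) = 0$ for $p \geq 2$ and both $HH_0(L_\infty)$, $HH_1(L_\infty)$ nonzero. Granted this, Lemma~\ref{lem:hhtenso} yields the proposition directly: apply it with $n=1$, $m=2$ and factors $R_1 = L(E)$, $(S_1,S_2) = (L(E), L_\infty)$ for the first statement, and $R_1 = L_\infty$, $(S_1,S_2) = (L_\infty, L_\infty)$ for the second.

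I compute $HH_*(L_\infty)$ by a filtered colimit. Let $L_\infty^{(n)} \subset L_\infty$ be the unital subalgebra generated by $x_1, \ldots, x_n, x_1^*, \ldots, x_n^*$, so $L_\infty = \coli_n L_\infty^{(n)}$. The crucial geometric input is that $L_\infty^{(n)}$ is isomorphic to the full corner $p_v L(G_n) p_v$, where $G_n$ is the finite quiver with vertices $\{v,w\}$, $n$ loops $x_1, \ldots, x_n$ at $v$, and one arrow $e : v \to w$ making $w$ a sink; fullness of $p_v$ follows from $p_w = e^* e \in L(G_n) p_v L(G_n)$. Since $G_n$ has no sources and contains the nontrivial closed path $x_1$, Corollary~\ref{cor:nonzero} combined with Morita invariance~\eqref{morita} gives $HH_p(L_\infty^{(n)}) = 0$ for $p \geq 2$, and Lemma~\ref{lem:coli} promotes this to $HH_p(L_\infty) = 0$ for $p \geq 2$.

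For the nonvanishing I exhibit classes that persist along the filtration. For $HH_0$, the Morita iso sends $[1_{L_\infty^{(n)}}]$ to $[p_v] \in HH_0(L(G_n))$, a nonzero generator of the weight-$0$ component $\coker(1 - N_{G_n}^t) \cong k$ given by Theorem~\ref{thm:hhl1}; since the transition maps preserve $1$, $[1]$ is nonzero in $HH_0(L_\infty)$. For $HH_1$, consider the weight-$1$ Hochschild $1$-cycle
\[
c \;:=\; x_1 \otimes x_1 x_1^* \;-\; (1 - x_1 x_1^*) \otimes x_1 \;\in\; L_\infty^{(1)} \otimes L_\infty^{(1)},
\]
for which $b(c) = (x_1^2 x_1^* - x_1) - (x_1^2 x_1^* - x_1) = 0$. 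Using the skew Laurent presentation of $L(G_n)$ with $t_+ = x_1 + e$, one computes $x_1 - \phi(x_1) = [1 - x_1 x_1^*, x_1]$, so $[x_1] \in \ker(1 - \phi)$; the mapping-cone recipe of Proposition~\ref{prop:monlorenz} then produces an explicit cycle $\tilde c_n \in L(G_n)^{\otimes 2}$ representing $[x_1] \in V_1^{(n)} = \ker(1 - \sigma)$, and one verifies that $\tilde c_n - c$ is a Hochschild boundary in $L(G_n)^{\otimes 2}$. Hence $[c]$ corresponds under Morita to the nonzero class $[x_1]$ for every $n \geq 1$. Since any $2$-chain witnessing $[c] = 0$ in $L_\infty$ would by finite support lie in some $(L_\infty^{(N)})^{\otimes 3}$, contradicting $[c] \neq 0$ in $HH_1(L_\infty^{(N)})$, we conclude $HH_1(L_\infty) \neq 0$. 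The main obstacle is precisely this mapping-cone identification of $[c]$ with $[x_1]$, a careful but routine computation inside the framework of Section~\ref{sec:huni} and Proposition~\ref{prop:monlorenz}.
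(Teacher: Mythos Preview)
Your overall strategy matches the paper's: write $L_\infty$ as a filtered colimit of algebras whose Hochschild homology is controlled by Theorem~\ref{thm:hhl1} and Corollary~\ref{cor:nonzero}, deduce the profile $HH_0,HH_1\neq 0$ and $HH_{\ge 2}=0$, and conclude via Lemma~\ref{lem:hhtenso}. The tactics differ in two places. First, the paper models $C_n=L_\infty^{(n)}$ by the graph $E_n$ with two vertices $v,w$, $n$ loops $e_i$ at $v$, and $n$ arrows $f_i\colon v\to w$, exhibiting an honest \emph{isomorphism} $C_n\cong L(E_n)$ via $x_i\mapsto e_i+f_i$; you instead use $G_n$ (one arrow to the sink) and a full-corner Morita equivalence. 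Both are correct, but the isomorphism lets the paper read off ${}_mHH_*(C_n)$ directly from Theorem~\ref{thm:hhl1} and then simply pass the formula to the colimit: ${}_mHH_1(L_\infty)=\ker(1-\sigma)$ on the space of words of length $|m|$ in $x_1,x_2,\dots$, which is visibly nonzero.

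Second, and relatedly, your argument for $HH_1(L_\infty)\neq 0$ is more laborious than needed. Tracking the explicit class $[c]$ through the corner inclusion \emph{and} the quasi-isomorphism of Proposition~\ref{prop:monlorenz} to identify it with $[x_1]\in V_1$ is feasible, but it is not as ``routine'' as you suggest: it requires chasing the explicit map (only referenced, not written out, in the paper) from ${}_1HH(L(G_n))$ to the cone, and verifying that the cone representative $\tilde c_n$ you would obtain differs from $c$ by a boundary. You do not actually perform this computation. The paper's approach avoids the issue completely, since with a genuine isomorphism $C_n\cong L(E_n)$ no Morita transfer or cone-chase is needed, and nonvanishing of $HH_1$ is immediate from the colimit formula. (Minor point: your identity $x_1-\phi(x_1)=[1-x_1x_1^*,x_1]$ is off by a sign; one has $x_1-\phi(x_1)=[x_1,\,1-x_1x_1^*]$, which of course does not affect the conclusion that $(1-\phi)[x_1]=0$ in $HH_0(L_0,L_1)$.)
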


\begin{proof}
Let $C_n$ be the algebra presented  by generators $x_1,x_1^*,\dots
,x_n,x_n^*$ and relations $x_i^*x_j=\delta _{i,j}1$, for $1\le
i,j\le n$. Then \begin{equation} \label{eq:directlim}
L_{\infty}=\varinjlim C_n\, ,
\end{equation}
and $C_n\cong L(E_n)$, where $E_n$ is the graph having two vertices
$v,w$ and $2n$ arrows $e_1,\dots ,e_n,f_1,\dots, f_n$, with
$s(e_i)=r(e_i)=v= s(f_i)$ and $r(f_i)=w$ for $1\le i\le n$. (The
isomorphism $C_n\to L(E_n)$ is obtained by sending $x_i$ to
$e_i+f_i$ and $x_i^*$ to $e_i^*+f_i^*$.) It follows from Theorem
\ref{thm:hhl1} and (\ref{eq:directlim}) that the formulas in Theorem
\ref{thm:hhl1} for ${}_mHH_n(L_{\infty})$, $m\ne 0$, hold taking as
$V_{i,m}$ the vector space generated by all the words in
$x_1,x_2,\dots $ of length $m$, and that ${}_0HH_0(L_{\infty})=k$
and ${}_0HH_n(L_{\infty})=0$ for $n\ge 1$. As before, Lemma
\ref{lem:hhtenso} gives the result.
\end{proof}

\begin{thm}\label{thm:tensoinf}
Let $E_1,\dots,E_n$ and  $F_1,\dots,F_m,\dots$ be a finite and an
infinite sequence of quivers. Assume that the number of indices $i$
such that $F_i$ has at least one non-trivial closed path is
infinite. Then the algebras $L(E_1)\otimes\dots\otimes L(E_n)$ and
$\bigotimes_{i=1}^\infty L(F_i)$ are not Morita equivalent.
\end{thm}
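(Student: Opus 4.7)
The strategy parallels Lemma \ref{lem:hhtenso}, but that lemma cannot be applied verbatim since we do not assume that all $E_i$, nor all $F_j$, have non-trivial closed paths. The plan is to show that $R:=\bigotimes_{i=1}^n L(E_i)$ satisfies $HH_N(R)=0$ for every $N>n$, whereas $S:=\bigotimes_{j=1}^\infty L(F_j)$ satisfies $HH_N(S)\ne 0$ for every $N\ge 0$; since Hochschild homology is a Morita invariant by \eqref{morita}, this will force $R$ and $S$ not to be Morita equivalent.

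For the vanishing on the $R$-side, I would first observe that $HH_p(L(E))=0$ for $p\ge 2$ and every finite quiver $E$: when $E$ has at least one non-trivial closed path this is part of Corollary \ref{cor:nonzero}(i), and when $E$ has no non-trivial closed paths $L(E)$ is a finite direct product of matrix algebras over $k$, hence separable, so its Hochschild homology vanishes in positive degrees. Iterated application of the K\"unneth formula of Section \ref{sec:huni} then yields $HH_N(R)=0$ for every $N>n$.

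For the non-vanishing on the $S$-side, set $S_{\le M}:=\bigotimes_{j=1}^M L(F_j)$, so that $S=\coli_M S_{\le M}$ and, by Lemma \ref{lem:coli}, $HH_N(S)=\coli_M HH_N(S_{\le M})$. Given $N\ge 1$, the hypothesis lets us choose $M$ together with indices $j_1<\dots<j_N\le M$ for which each $F_{j_s}$ has a non-trivial closed path; by Corollary \ref{cor:nonzero}(iii) we have $HH_1(L(F_{j_s}))\ne 0$, while $HH_0(L(F_j))\ne 0$ for every $j$ (in the acyclic case $L(F_j)$ is a non-trivial semisimple algebra). The K\"unneth formula then exhibits as a non-zero direct summand of $HH_N(S_{\le M})$ the tensor product
\[
\Big(\bigotimes_{s=1}^N HH_1(L(F_{j_s}))\Big)\otimes\Big(\bigotimes_{\substack{1\le j\le M\\ j\notin\{j_1,\dots,j_N\}}} HH_0(L(F_j))\Big).
\]
Under the transition map $HH_N(S_{\le M})\to HH_N(S_{\le M+1})$ induced by the unital inclusion $x\mapsto x\otimes 1$, this summand maps injectively into the analogous summand for $M+1$ by tensoring with the unit $k\to HH_0(L(F_{M+1}))$. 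Hence $HH_N(S)\ne 0$, and taking $N=n+1$ separates $R$ from $S$. The one mildly delicate point is the bookkeeping needed to isolate a specific K\"unneth summand and verify that its image survives through every transition map of the colimit; everything else reduces to already established material.
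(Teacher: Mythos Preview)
Your approach is the same as the paper's --- the paper simply says ``Immediate from Lemma~\ref{lem:hhtenso} and Corollary~\ref{cor:nonzero}(iii)'', and you are unpacking that argument under the weaker hypotheses --- but the step you yourself flag as ``mildly delicate'' does not go through as written. Under K\"unneth, the transition map $HH_N(S_{\le M})\to HH_N(S_{\le M+1})$ induced by $x\mapsto x\otimes 1$ is $\xi\mapsto \xi\otimes[1]$ into the summand $HH_N(S_{\le M})\otimes HH_0(L(F_{M+1}))$; for this to be injective you need $[1]\ne 0$ in $HH_0(L(F_{M+1}))$. That fails already when $F_{M+1}$ is the quiver with one vertex and two loops: in $L_2$ one has $[x_1,x_1^*]+[x_2,x_2^*]=(x_1x_1^*+x_2x_2^*)-2=-1$, so $[1]=0$ in $HH_0(L_2)$ (equivalently, ${}_0HH_0(L_2)=\coker(1-2:k\to k)=0$ by Theorem~\ref{thm:hhl1}). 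In that case the transition map on $HH_N$ is the zero map and your chosen summand dies. In particular, for $S=\bigotimes_{j=1}^\infty L_2$ every transition map vanishes and $HH_*(S)=0$, so the assertion ``$HH_N(S)\ne 0$ for every $N$'' is simply false for this $S$. (The paper's one-line appeal to Lemma~\ref{lem:hhtenso} hides the same issue: the displayed formula for $HH_n(S)$ in that lemma's proof tacitly assumes the infinite tensor product $\bigotimes_{j\notin J}HH_0(S_j)$, formed along the classes $[1]$, is nonzero.)

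The argument is easily repaired by a case split. If $[1]=0$ in $HH_0(L(F_j))$ for infinitely many $j$, then cofinally many transition maps vanish and $HH_*(S)=0$; since $HH_0(R)=\bigotimes_{i=1}^n HH_0(L(E_i))$ is a finite tensor product of nonzero spaces (by your own observations for the acyclic case and Corollary~\ref{cor:nonzero}(iii) otherwise), $HH_0(R)\ne 0$ and $R$, $S$ are not Morita equivalent. Otherwise choose $j_0$ with $[1]\ne 0$ in $HH_0(L(F_j))$ for all $j>j_0$; then for $M\ge j_0$ the transition maps $\xi\mapsto\xi\otimes[1]$ are injective (we are over a field), and your K\"unneth-summand argument --- run with the $N$ non-acyclic indices chosen beyond $j_0$, which is possible since infinitely many $F_j$ are non-acyclic --- now correctly exhibits $HH_N(S)\ne 0$ for every $N$. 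The vanishing on the $R$-side is fine as you wrote it.
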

\begin{proof}
Immediate from Lemma \ref{lem:hhtenso} and Corollary
\ref{cor:nonzero}(iii).
\end{proof}
\begin{exa}
Let $L^{(\infty )}=\bigotimes_{i=1}^\infty L_2$, and let $E$ be any
quiver having at least one non-trivial closed path. Then $L^{(\infty
)}\otimes L(E)$ and $L(E)$ are not Morita equivalent.
\end{exa}

It would be interesting to know the answer to the following
question:

\begin{que}
\label{que:maps} Is there a unital homomorphism $\phi\colon
L_2\otimes L_2 \to L_2$?
\end{que}

Observe that, to build a unital homomorphism $\phi \colon L_2\otimes
L_2\to L_2$, it is enough to exhibit a {\it non-zero} homomorphism
$\psi\colon L_2\otimes L_2\to L_2$, because $eL_2e\cong L_2$ for
every non-zero idempotent $e$ in $L_2$.

\medskip
\section{$K$-theory}

To conclude the paper we note that algebraic $K$-theory cannot distinguish
between $L_2$ and $L_2\otimes L_2$ or between $L_\infty$ and $L_\infty\otimes L_\infty$.
For this we need a lemma, which
might be of independent interest. Recall that a unital ring $R$ is
said to be {\it regular supercoherent} in case all the polynomial
rings $R[t_1,\dots ,t_n]$ are regular coherent in the sense of
\cite{Gersten}.

\begin{lem}
\label{reg-super} Let $E$ be a finite graph. Then $L(E)$ is regular
supercoherent.
\end{lem}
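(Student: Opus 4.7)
The plan is to reduce the result to a known fact about the skew Laurent polynomial construction, using the decomposition $L(E)=L_0[t_+,t_-,\phi]$ from \eqref{skewle}. First I would reduce to the case where $E$ has no sources. As in the proof of Corollary \ref{cor:nonzero}, one can find a finite complete subgraph $F\subset E$ without sources such that $L(F)$ is a full corner of $L(E)$ with respect to a homogeneous idempotent. Regular supercoherence is preserved under Morita equivalence (since $R[t_1,\dots,t_n]$ and $S[t_1,\dots,t_n]$ are Morita equivalent whenever $R$ and $S$ are, and regular coherence is Morita invariant), so it suffices to treat $L(F)$. From here on I assume $E$ has no sources.

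Next, I would show that $L_0$ is regular supercoherent. By \eqref{L0n}, $L_0=\bigcup_n L_{0,n}$ is a filtered colimit of finite products of matrix algebras over $k$, and the transition maps $L_{0,n}\hookrightarrow L_{0,n+1}$ are unital inclusions of (semi)simple algebras, hence flat. For each $m\ge 0$ one has $L_0[t_1,\dots,t_m]=\coli_n L_{0,n}[t_1,\dots,t_m]$. Each $L_{0,n}[t_1,\dots,t_m]$ is a finite product of matrix rings over the regular Noetherian ring $k[t_1,\dots,t_m]$, hence regular coherent; and a filtered colimit of regular coherent rings along flat transition maps is again regular coherent, by a standard argument tracking finite projective resolutions of finitely presented modules through the colimit. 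Thus $L_0[t_1,\dots,t_m]$ is regular coherent for all $m$, which by definition says that $L_0$ is regular supercoherent.

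The final step is to promote regular supercoherence from $L_0$ to $L_0[t_+,t_-,\phi]=L(E)$. For this I would invoke the analogue for skew Laurent extensions along a corner isomorphism of the fundamental theorem that $R[t,t^{-1}]$ is regular coherent whenever $R$ is: namely, if $R$ is regular supercoherent and $\phi:R\to pRp$ is a corner isomorphism, then $R[t_+,t_-,\phi]$ is regular supercoherent. This is in the same spirit as the $K$-theoretic arguments of \cite{abc}, where the skew Laurent description of $L(E)$ is exploited together with the fact that $\phi$ can be replaced by an honest automorphism after passing to the colimit $A=\coli(R\stackrel{\phi}{\to} R\stackrel{\phi}{\to}\cdots)$. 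Since $(L_0[t_+,t_-,\phi])[t_1,\dots,t_m]=L_0[t_1,\dots,t_m][t_+,t_-,\phi']$, with $\phi'$ the obvious extension, one reduces to checking that regular coherence is preserved under a single skew Laurent extension along a corner isomorphism.

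The main obstacle is this last step. The construction of finite projective resolutions over $R[t_+,t_-,\phi]$ requires understanding finitely presented modules over the skew Laurent ring, which is typically done via a short exact sequence of Bass--Heller--Swan type relating them to finitely presented modules over $R$. After replacing $\phi$ by $\hat\phi$ on the colimit $A$ as above, the analysis proceeds as for an honest $\Z$-action, where the result follows from Waldhausen-type arguments for HNN-like extensions; the descent back to $L_0[t_+,t_-,\phi]$ uses that both regular coherence and the passage to polynomial rings commute with the filtering colimit, exactly as in the third step of the proof of \cite[Thm.~3.6]{abc}.
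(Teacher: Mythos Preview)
Your approach via the skew Laurent description $L(E)=L_0[t_+,t_-,\phi]$ is genuinely different from the paper's. The paper instead observes that the path algebra $P(E)$ is regular supercoherent (the argument of \cite[Lemma~7.4]{AB} for $P(E)[t]$ works for $P(E)[t_1,\dots,t_n]$ as well), and that the universal localization $P(E)\to L(E)=\Sigma^{-1}P(E)$ is left flat by \cite[Prop.~4.1]{AB}; flat localization preserves left regular supercoherence (see \cite[p.~23]{abc}), and the involution on $L(E)\otimes k[t_1,\dots,t_n]$ upgrades this to two-sided regular supercoherence. This route never touches the skew Laurent presentation, and your Steps~1 and~2 become unnecessary.

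The genuine gap in your proposal is Step~3. You need that regular coherence passes from $R$ to $R[t_+,t_-,\phi]$ for a corner isomorphism $\phi$, and the reduction you sketch via the colimit $A=\coli(R\stackrel{\phi}{\to}R\stackrel{\phi}{\to}\cdots)$ does not work as written. First, $A$ is typically non-unital (the images $\phi^n(1)$ form a strictly increasing chain of idempotents), so ``regular coherent'' for $A$ and for $B=A\rtimes_{\hat\phi}\Z$ is not defined in the usual sense. Second, and more seriously, the argument in the third step of \cite[Thm.~3.6]{abc} (used again in Proposition~\ref{prop:monlorenz}) exhibits $B$ as a filtered colimit of matrix-type enlargements of $S=R[t_+,t_-,\phi]$; this lets one transfer properties \emph{from} $S$ \emph{to} $B$, not the reverse, so it cannot be used to descend regular coherence back to $S$. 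Third, the ``Waldhausen-type arguments for HNN-like extensions'' you invoke establish $K$-theory computations \emph{assuming} regular coherence; they do not show that twisted Laurent or HNN constructions \emph{preserve} regular coherence. Even the honest-automorphism case would require a separate homological argument, and the corner-isomorphism case remains open on your route.
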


\begin{proof} Let $P(E)$ be the usual path algebra of $E$. It was observed in the proof
of \cite[Lemma 7.4]{AB} that the algebra $P(E)[t]$ is regular
coherent. The same proof gives that all the polynomial algebras
$P(E)[t_1,\dots ,t_n]$ are regular coherent. This shows that $P(E)$
is regular supercoherent. By \cite[Proposition 4.1]{AB}, the
universal localization $P(E)\to L(E)=\Sigma^{-1}P(E)$ is flat on the
left. It follows that $L(E)$ is left regular supercoherent (see
\cite[page 23]{abc}). Since $L(E)\otimes k[t_1,\dots ,t_n]$ admits
an involution, it follows that $L(E)$ is regular supercoherent.
\end{proof}

\begin{prop}
\label{no-distinguish} Let $R$ be regular supercoherent. Then the
algebraic $K$-theories of $L_2$ and of $L_2\otimes R$ are both
trivial.
\end{prop}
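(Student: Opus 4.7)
My plan is to combine the skew Laurent polynomial description $L_2 = L_0[t_+, t_-, \phi]$ from \eqref{skewle} with the algebraic $K$-theory analog of Proposition \ref{prop:monlorenz}. The latter is a Bass--Heller--Swan type long exact sequence for $K$-theory of corner skew Laurent polynomial rings, established in \cite[Thm.~3.6]{abc}; its proof uses only that $K$-theory commutes with filtered colimits and is matrix invariant on rings where it satisfies excision, which are precisely the properties invoked in Proposition \ref{prop:monlorenz}. Tensoring with a regular supercoherent $R$ preserves the skew Laurent structure, giving $L_2 \otimes R = (L_0 \otimes R)[t_+, t_-, \phi \otimes 1]$ and, I expect, a long exact sequence
\begin{equation*}
\cdots \to K_n(L_0 \otimes R) \xrightarrow{1 - \phi_*} K_n(L_0 \otimes R) \to K_n(L_2 \otimes R) \to K_{n-1}(L_0 \otimes R) \to \cdots.
\end{equation*}
The proof then reduces to showing that $1 - \phi_*$ is an isomorphism on $K_n(L_0 \otimes R)$ for every $n \in \Z$.

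Next I would identify $K_*(L_0 \otimes R)$ and $\phi_*$ explicitly. Since $L_0 = \bigcup_{m \ge 0} M_{2^m}(k)$ with transition maps given by the diagonal embeddings $A \mapsto A \otimes I_2$, we have $L_0 \otimes R = \coli_m M_{2^m}(R)$. Regular supercoherence of $R$ ensures that each $M_{2^m}(R)$ is $K$-regular and that $K$-theory commutes with this filtered colimit. By Morita invariance the level-$m$ $K$-group is identified with $K_n(R)$ and the transition becomes multiplication by $2$; hence $K_n(L_0 \otimes R) \cong K_n(R)[1/2]$. On the other hand, $\phi(x) = t_+ x t_-$ acts on a matrix unit $E_{\gamma, \nu} \in L_{0,m}$ by $E_{\gamma, \nu} \mapsto E_{\alpha_1 \gamma, \alpha_1 \nu} \in L_{0,m+1}$, a corner (rank-preserving) embedding of $M_{2^m}(R)$ into $M_{2^{m+1}}(R)$; consequently $\phi_*$ on $K_n(R)[1/2]$ is multiplication by $1/2$. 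Therefore $1 - \phi_* = 1/2$ is an automorphism, forcing $K_n(L_2 \otimes R) = 0$ for all $n \in \Z$. The case of $L_2$ alone follows by specializing $R = k$.

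The main technical obstacle is to justify the applicability of the $K$-theoretic skew Laurent sequence in the tensor-product setting, i.e.\ with coefficient ring $L_0 \otimes R$ in place of $L_0$. This requires that $L_0 \otimes R$ and the auxiliary rings appearing in the telescope argument of \cite[Thm.~3.6]{abc} lie in a class for which $K$-theory is excisive and matrix invariant; regular supercoherence of $R$, combined with Lemma \ref{reg-super} and the stability of this class under filtered colimits and Morita equivalence, should be exactly what is needed.
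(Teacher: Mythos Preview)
Your argument is correct and is essentially the paper's approach, just unpacked. The paper's proof is a one-line citation: observing that $L_2\otimes R=L_R(E)$ for $E$ the rose with two petals, it invokes \cite[Thm.~7.6]{abc}, which for regular supercoherent $R$ gives a long exact sequence with connecting map $1-N_E^t$ on $K_n(R)^{E_0}$; here $1-N_E^t=-1$ is invertible, so $K_*(L_2\otimes R)=0$. What you do is re-derive this special case of \cite[Thm.~7.6]{abc} by hand from the underlying Bass--Heller--Swan sequence \cite[Thm.~3.6]{abc}: you compute $K_n(L_0\otimes R)=K_n(R)[1/2]$ and $\phi_*=\tfrac{1}{2}$, reaching the same conclusion via $1-\phi_*=\tfrac{1}{2}$ rather than $1-N^t=-1$. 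The ``main technical obstacle'' you flag---applicability of the sequence with coefficients in $R$---is exactly what \cite[Thm.~7.6]{abc} packages, so citing it directly both shortens the proof and removes the need for that verification.
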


\begin{proof}
Let $E$ be the quiver with one vertex and two arrows. Then $L_2\cong
L(E)$, and we have
$$L_2\otimes R=L_{R}(E).$$
Applying \cite[Theorem 7.6]{abc} we obtain that
$K_*(L_R(E))=K_*(L(E))=0$. The result follows.
\end{proof}

We finally obtain a $K$-absorbing result for Leavitt path algebras
of finite graphs, indeed for any regular supercoherent algebra.

\begin{prop}
\label{LinfKabsorbs} Let $R$ be a regular supercoherent algebra.
Then the natural inclusion $R\to R\otimes L_{\infty}$ induces an
isomorphism $K_i(R)\to K_i(R\otimes L_{\infty})$ for all $i\in
\mathbb Z$.
\end{prop}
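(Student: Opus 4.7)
The plan is to use that $L_\infty = \coli_n C_n$ (from the proof of Proposition~\ref{prop:linfi}), where $C_n \cong L(E_n)$ for the finite graph $E_n$ with two vertices $v$ (emitting $n$ loops and $n$ arrows to the sink $w$); apply \cite[Theorem 7.6]{abc} to each $L_R(E_n):=R\otimes L(E_n)$; observe that every natural map $K_i(R) \to K_i(L_R(E_n))$ is already an isomorphism; and pass to the colimit.

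Since tensor products and algebraic $K$-theory of regular supercoherent rings both commute with filtered colimits, one has $K_i(R\otimes L_\infty)=\coli_n K_i(L_R(E_n))$. By \cite[Theorem 7.6]{abc}, $K_*(L_R(E_n))$ fits into a long exact sequence
\[
\cdots \to K_i(R) \xrightarrow{\alpha_n} K_i(R)^{2} \to K_i(L_R(E_n)) \to K_{i-1}(R) \xrightarrow{\alpha_{n}} \cdots,
\]
in which $\alpha_n$ is induced by $1-N_{E_n}^t=\binom{-n}{1-n}$, i.e.\ $\alpha_n(x)=(-nx,(1-n)x)$, and the map $K_i(R)^{2} \to K_i(L_R(E_n))$ sends $(u,v)$ to $u[p_v]+v[p_w]$. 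Since $\gcd(n,1-n)=1$, the column $\binom{-n}{1-n}$ has a left inverse over $\Z$, so $\alpha_n$ is a split monomorphism on every $K_i(R)$. The long exact sequence thus breaks into split short exact sequences, and the projection onto $K_i(L_R(E_n))\cong K_i(R)^{2}/\alpha_n K_i(R)$ may be realized as $(u,v)\mapsto(1-n)u+nv$, which indeed annihilates the image of $\alpha_n$ since $(1-n)(-n)+n(1-n)=0$.

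The unit inclusion $R\to L_R(E_n)$, $r\mapsto r(p_v+p_w)$, induces on $K$-theory the composition $K_i(R)\xrightarrow{(x,x)} K_i(R)^{2}\to K_i(L_R(E_n))$; combining with the identification $K_i(L_R(E_n))\cong K_i(R)$ above gives $x\mapsto(1-n)x+nx=x$, so each $f_n\colon K_i(R)\to K_i(L_R(E_n))$ is an isomorphism.

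The main subtle point is that the transition maps $L_R(E_n)\to L_R(E_{n+1})$ come from the algebra inclusions $C_n\hookrightarrow C_{n+1}$, which are not induced by a graph inclusion $E_n\to E_{n+1}$ in any direct way, so the induced maps on $K_*$ are not immediately read off from the exact sequences above. One does not need to compute them, however: by functoriality, the transition maps $g_{n,n+1}\colon K_i(L_R(E_n))\to K_i(L_R(E_{n+1}))$ satisfy $g_{n,n+1}\circ f_n=f_{n+1}$, and since $f_n$ and $f_{n+1}$ are both isomorphisms so is $g_{n,n+1}$. Passing to the colimit then yields the desired isomorphism $K_i(R)\to K_i(R\otimes L_\infty)$.
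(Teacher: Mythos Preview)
Your proof is correct and follows essentially the same approach as the paper: reduce to the finite stages $L_R(E_n)$ via the colimit $L_\infty=\coli_n C_n$, apply \cite[Theorem 7.6]{abc} to identify $K_i(L_R(E_n))\cong K_i(R)^2/\mathrm{im}\binom{-n}{1-n}$, and check that the unit map $R\to L_R(E_n)$ induces the diagonal followed by the quotient, hence an isomorphism. Your treatment is slightly more explicit than the paper's (you spell out the split-monomorphism argument for $\alpha_n$ and address the transition maps via functoriality, whereas the paper simply asserts that showing each $K_i(R)\to K_i(L_R(E_n))$ is an isomorphism suffices), but the strategy is the same.
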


\begin{proof}
Adopting the notation used in the proof of Proposition
\ref{prop:linfi}, we see that it is enough to show that the natural
map $R\to R\otimes L(E_n)$ induces isomorphisms $K_i(R)\to
K_i(R\otimes L(E_n))$ for all $i\in \Z$ and all $n\ge 1$. Since $R$
is regular supercoherent the $K$-theory of $R\otimes L(E_n)\cong
L_R(E_n)$ can be computed by using \cite[Theorem 7.6]{abc}. By the
explicit form of the quiver $E_n$, we thus obtain that
$$K_i(R\otimes L(E_n))\cong (K_i(R)\oplus K_i(R))/(-n,1-n)K_i(R).$$
The natural map $R\to  L_R(E_n)$ factors as
$$R\to Rv\oplus Rw \to  L_R(E_n)\, . $$
The first map induces the diagonal homomorphism $K_i(R)\to
K_i(R)\oplus K_i(R)$ sending $x$ to $(x,x)$. The second map induces
the natural surjection
$$K_i(R)\oplus K_i(R)\to (K_i(R)\oplus K_i(R))/(-n,1-n)K_i(R).$$
Therefore the natural homomorphism $R\to L_R(E_n)$ induces an
isomorphism $$K_i(R)\overset\sim\longrightarrow K_i(L_R(E_n)).$$ This concludes the
proof.
\end{proof}
\begin{cor}\label{cor:linfy}
The natural maps $k\to L_\infty\to L_\infty\otimes L_\infty$ induce
$K$-theory isomorphisms $K_*(k)=K_*(L_\infty)=K_*(L_\infty\otimes
L_\infty)$.
\end{cor}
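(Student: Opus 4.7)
The plan is to deduce both isomorphisms from Proposition \ref{LinfKabsorbs} together with the fact that algebraic $K$-theory commutes with filtered colimits of unital rings (in all degrees $i\in\Z$).

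First I would handle the isomorphism $K_*(k)\cong K_*(L_\infty)$. This is the direct instance of Proposition \ref{LinfKabsorbs} with $R=k$: the field $k$ is regular supercoherent (every $k[t_1,\dots,t_n]$ is Noetherian, in particular regular coherent), and the natural map $k\to k\otimes L_\infty=L_\infty$ is exactly the inclusion of scalars, so the proposition provides the desired isomorphism.

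For the second isomorphism $K_*(L_\infty)\cong K_*(L_\infty\otimes L_\infty)$ one cannot apply Proposition \ref{LinfKabsorbs} directly with $R=L_\infty$, because $L_\infty$ is not known to be regular supercoherent (coherence is delicate under filtered colimits). The workaround is to use the presentation $L_\infty=\varinjlim_n C_n\cong \varinjlim_n L(E_n)$ recorded in the proof of Proposition \ref{prop:linfi}. Since tensor product commutes with colimits, we have
\[
L_\infty\otimes L_\infty=\varinjlim_n\bigl(L(E_n)\otimes L_\infty\bigr).
\]
By Lemma \ref{reg-super} each $L(E_n)$ is regular supercoherent, so Proposition \ref{LinfKabsorbs} applied with $R=L(E_n)$ gives, for every $i\in\Z$ and $n\ge 1$, an isomorphism $K_i(L(E_n))\weq K_i(L(E_n)\otimes L_\infty)$ induced by the inclusion $x\mapsto x\otimes 1$. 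These isomorphisms are natural in $n$, so passing to the colimit yields
\[
K_i(L_\infty)=\varinjlim_n K_i(L(E_n))\weq \varinjlim_n K_i(L(E_n)\otimes L_\infty)=K_i(L_\infty\otimes L_\infty),
\]
and this colimit map is precisely the one induced by $L_\infty\to L_\infty\otimes L_\infty$, $x\mapsto x\otimes 1$. Composing with the first isomorphism gives the statement.

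The only point that requires care is the final colimit step, specifically that algebraic $K$-theory (including negative degrees, as used throughout the paper) commutes with filtered colimits of unital rings; this is classical, so I do not expect a serious obstacle. The main conceptual input is simply the recognition that one should not try to feed $L_\infty$ itself into Proposition \ref{LinfKabsorbs}, but rather exhaust it by the regular supercoherent algebras $L(E_n)$.
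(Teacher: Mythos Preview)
Your proof is correct and follows exactly the paper's approach: apply Proposition~\ref{LinfKabsorbs} first with $R=k$, then with $R=L(E_n)$ (using Lemma~\ref{reg-super}), and pass to the filtered colimit over $n$. The paper's proof is terser but identical in substance.
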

\begin{proof}
A first application of Proposition \ref{LinfKabsorbs} gives
$K_*(k)=K_*(L_\infty)$. A second application shows that for $E_n$ as
in the proof above, the inclusion $L(E_n)\to L(E_n)\otimes L_\infty$
induces a $K$-theory isomorphism; passing to the limit, we obtain
the corollary.
\end{proof}

\begin{ack} Part of the research for this article was carried out during a visit of the second named author
to the Centre de Recerca Matem\`atica. He is indebted to this
institution for its hospitality.
\end{ack}


\begin{thebibliography}{20}

\bibitem{AA}
G. Abrams, G. Aranda Pino. {\it The Leavitt path algebra of a
graph.} J. Algebra {\bf 293} (2005) 319--334.

\bibitem{ALPS}
G. Abrams, A. Louly, E. Pardo, C. Smith. {\it Flow invariants in the
classification of Leavitt path algebras}. J. Algebra {\bf  333}
(2011) 202--231.


\bibitem{AB}
P. Ara, M. Brustenga. {\it Module theory over Leavitt path algebras
and $K$-theory}. J. Pure Appl. Algebra {\bf 214} (2010) 1131--1151.

\bibitem{AB1}
P. Ara, M. Brustenga. {\it The regular algebra of a quiver.} J.
Algebra {\bf 309} (2007) 207--235.

\bibitem{abc}
P. Ara, M. Brustenga, G. Corti\~nas. {\it K-theory of Leavitt path
algebras}.  M\"unster Journal of Mathematics, {\bf 2} (2009) 5--34.

\bibitem{skew}
P. Ara, M.A. Gonz\'alez-Barroso, K.R. Goodearl, E. Pardo. {\it
Fractional skew monoid rings}. J.  Algebra {\bf 278} (2004)
104--126.

\bibitem{AMP}
P. Ara, M. A.  Moreno, E. Pardo, {\it Nonstable K-theory for graph
algebras}. Algebr. Represent. Theory {\bf 10} (2007) 157--178.

\bibitem{BB} J. Bell, G. Bergman. Private communication, 2011.

\bibitem{BD}
G. M. Bergman and Warren Dicks, {\it Universal derivations and
universal ring constructions}. Pacific J. Math. 79 (1978) 293--337.

\bibitem{CE}
H. Cartan and S. Eilenberg, {\it Homological Algebra}. Princeton
University Press, Princeton, N. J., ~1956.

\bibitem{proper}
G. Corti\~nas, E. Ellis. {\it Isomorphism conjectures with proper
coefficients}. \verb|arXiv:1108.5196v3|.

\bibitem{ERZ}
S. Eilenberg, A. Rosenberg and D. Zelinsky, {\it On the dimension of
modules and algebras, VIII. Dimension of tensor products}. Nagoya
Math. J. 12 (1957), 71--93.

\bibitem{Gersten}
S. M. Gersten. {\it  K-theory of free rings}. Comm. Algebra {\bf 1}
(1974) 39--64.

\bibitem{Kirch}
E. Kirchberg, {\it The classification of purely infinite C*-algebras
using Kasparov theory}. Preprint.

\bibitem{kp}
E. Kirchberg, N.C. Phillips, {\it Embedding of exact $C^*$-algebras
into $\mathcal O_2$.} J. reine angew. Math. {\bf 525} (2000)
637--666.

\bibitem{lore}
M. Lorenz.
\newblock{\it On the homology of graded algebras.}
\newblock{Comm. Algebra.}
\newblock{ 20, (1992) 489--507.}

\bibitem{lea}
W. G. Leavitt. {\it The module type of a ring.}
 Trans. Amer. Math. Soc. {\bf 103} (1962) 113--130.

\bibitem{lod} J. L. Loday, {\it Cyclic homology}, 1st ed.
Grund. math. Wiss. 301. Springer-Verlag Berlin, Heidelberg 1998.

\bibitem{Phill} N. C. Phillips, {\it A classification theorem for nuclear
purely infinite simple C*-algebras}. Doc. Math. 5 (2000), 49--114.

\bibitem{ra} I. Raeburn.
Graph algebras. CBMS Regional Conference Series in Mathematics, 103.
Published for the Conference Board of the Mathematical Sciences,
Washington, DC; by the American Mathematical Society, Providence,
RI, 2005.

\bibitem{Rord} M. R\o rdam, {\it A short proof of Elliott's
theorem}. C. R. Math. Rep. Acad. Sci. Canada {\bf 16} (1994) 31--36.

\bibitem{RordEnc} M. R\o rdam,  {\it Classification of nuclear, simple C*-algebras}.
Classification of nuclear C*-algebras. Entropy in operator algebras,
Encyclopaedia Math. Sci. 126, 1--145, Springer, Berlin, 2002.


\bibitem{chubu} C. Weibel,
\newblock {\em An introduction to homological algebra},
\newblock Cambridge Univ.\ Press, 1994.

\bibitem{wodex} M. Wodzicki,
\newblock {\em Excision in cyclic homology and in rational algebraic $K$-Theory}.
\newblock  Ann. of Math. 129:591--639, 1989.


\end{thebibliography}
\end{document}